\newtheorem{thm}{Theorem}[section]
\newtheorem{lem}{Lemma}[section]
\newtheorem{prop}{Proposition}[section]
\newtheorem{defn}{Definition}[section]
\newtheorem{asmp}{Assumption}[section]
\newtheorem{rem}{Remark}
\title{\LARGE \bf Singularly Perturbed Averaging with Application to Bio-Inspired 3D Source Seeking}
\author{Mahmoud Abdelgalil, Asmaa Eldesoukey, and Haithem Taha
\thanks{This work was supported by NSF Grant CMMI-1846308}
\thanks{M. Abdelgalil and A. Eldesoukey are with the Department of Mechanical and Aerospace Engineering,
        University of California Irvine, Irvine, CA 92617, USA
        {\tt\small maabdelg@uci.edu, aeldesou@uci.edu}}%
\thanks{H. Taha is with the Faculty of Mechanical and Aerospace Engineering, University of California Irvine,
        Irvine, CA 92617, USA
        {\tt\small hetaha@uci.edu}}%
}
\begin{document}
\maketitle
\thispagestyle{empty}
\pagestyle{empty}

\begin{abstract}
We analyze a class of singularly perturbed high-amplitude, high-frequency oscillatory systems that arises in extremum seeking applications. We provide explicit formulas for averaging and establish the convergence of the trajectories of this class of systems to the trajectories of a suitably averaged reduced order system by combining the higher order averaging theorem with singular perturbation techniques. Finally, we propose a novel bio-inspired 3D source seeking algorithm and establish its singular practical stability. 
\end{abstract}
\section{Introduction}
Averaging techniques have been widely used in the construction and the stability analysis of solutions to time varying differential equations \cite{bogoliubov1961, sanders2007averaging, sarychev2001lie, bullo2002averaging, vela2003ageneral,maggia2020higher}. The rigorous application of the method of averaging begins by writing the system on the form:
\begin{align}
    \label{eq:voc_form}\frac{d\bm{\zeta}}{d\tau} & = \textbf{f}_0(\bm{\zeta},\tau) + \varepsilon\,\textbf{f}_1(\bm{\zeta},\tau,\varepsilon)
\end{align}
possibly via coordinate changes and time scaling, where the vector field $\textbf{f}_1$ is periodic in $\tau$ and $\varepsilon$ is a small parameter. When $\textbf{f}_0=0$, the system is said to be on the averaging canonical form and the averaging theorem can be directly applied \cite[Chapter 2]{sanders2007averaging}. If $\textbf{f}_0\neq 0$, the Variation of Constants (VOC) formula may be used to force the system on the averaging canonical form \cite[Section 1.6]{sanders2007averaging},\cite[Section 9.1]{bullo2004geometric}. However, even when the vector field $\textbf{f}_0$ is linear and time invariant, i.e. $\textbf{f}_0(\bm{\zeta},\tau) = \textbf{A}\bm{\zeta}$ for some matrix $\textbf{A}$, the VOC formula is practically useful only when the eigenvalues of the matrix $\textbf{A}$ are purely imaginary. This is due to the fact that the pullback of the vector field $\textbf{f}_1$ under the flow of $\textbf{f}_0$ will contain exponentially growing terms (see the discussion in \cite[Section 1.7]{sanders2007averaging}). 

In this manuscript, we analyze a class of singularly perturbed high-frequency, high-amplitude oscillatory systems described by equation (\ref{eq:orig_sys}) which naturally arises in extremum seeking applications \cite{krstic2000stability, durr2013lie}, and can be put on the form (\ref{eq:voc_form}). Yet, the VOC formula is not useful in analyzing this class of systems for the reasons outlined in the previous paragraph. Moreover, recent results in the literature such as the singularly perturbed Lie Bracket Approximation framework \cite{durr2015singularly, durr2017extremum} do not capture the stability properties as we illustrate below.

To analyze the behavior of this class of systems, we combine the higher order averaging theorem \cite{sanders2007averaging} with singular perturbation techniques \cite{khalil2002nonlinear} in a way that accounts for the interaction between the fast periodic time scale and the singularly perturbed part of the system. Furthermore, we propose a novel 3D source seeking algorithm for rigid bodies with a non-collocated sensor. The proposed algorithm is inspired by the chemotactic strategy of sea urchins sperm cells for seeking the egg in 3D \cite{friedrich2007chemotaxis,abdelgalil2021sperm}, and it utilizes the special structure of the matrix group SO(3). We prove the practical stability of the proposed algorithm using the singularly perturbed averaging results we state here. 
\section{Singularly Perturbed Second Order Averaging}
In this section, we state the main theorem of our work. Consider the interconnection of systems on the form:
\begin{align}
    \label{eq:orig_sys}
    \begin{aligned}
        \dot{\textbf{x}}&= \textbf{f}(\textbf{x},\textbf{y},t,\omega) + \frac{1}{\sqrt{\omega}}\textbf{f}_3(\textbf{x},\textbf{y},t,\omega),  & \textbf{x}(t_0) &= \textbf{x}_0\\ \dot{\textbf{y}}&=\textbf{g}(\textbf{x},\textbf{y},t,\omega)+\frac{1}{\sqrt{\omega}}\textbf{g}_3(\textbf{x},\textbf{y},t,\omega),  & \textbf{y}(t_0) &= \textbf{y}_0
    \end{aligned}
\end{align}
where $\textbf{x},\textbf{x}_0\in\mathbb{R}^n,\,\textbf{y},\textbf{y}_0\in\mathbb{R}^m,\,t,t_0\in\mathbb{R}$, $\omega\in(0,\infty)$, and the maps $\textbf{f},\,\textbf{g}$ are given by:
\begin{gather*}
    \textbf{f}(\textbf{x},\textbf{y},t,\omega) = \sum_{i\in\{1,2\}}\omega^{1-\frac{i}{2}}\, \textbf{f}_i(\textbf{x},\textbf{y},\omega t)\\
    \textbf{g}(\textbf{x},\textbf{y},t,\omega)= \omega \textbf{A}\,(\textbf{y}-\bm{\varphi}_0(\textbf{x})) +\sum_{i\in\{1,2\}}\omega^{1-\frac{i}{2}}\, \textbf{g}_i(\textbf{x},\textbf{y},\omega t)
\end{gather*}
We adopt the following assumptions on the regularity of the right-hand side of equation (\ref{eq:orig_sys}):
\begin{asmp}\thlabel{asmp:A}
Suppose that for $i\in\{1,2\}$:
    \begin{enumerate}
        \item \label{asmp:item_A1}  ${ \textbf{f}_i(\cdot,\cdot,\tau)\in\mathcal{C}^{3-i}(\mathbb{R}^{n+m};\mathbb{R}^n)}$, ${ \textbf{f}_i\in\mathcal{C}^0(\mathbb{R}^{n+m+1};\mathbb{R}^n)}$,
        \item $\textbf{g}_i(\cdot,\cdot,\tau)\in\mathcal{C}^{3-i}(\mathbb{R}^{n+m};\mathbb{R}^n)$, ${ \textbf{g}_i\in\mathcal{C}^0(\mathbb{R}^{n+m+1};\mathbb{R}^n)}$,
        \item \label{asmp:item_A2} $\exists T>0$ s.t. ${ \textbf{f}_i(\cdot,\cdot,\tau+T) = \textbf{f}_i(\cdot,\cdot,,\tau)}$, and $ \textbf{g}_i(\cdot,\cdot,\tau+T)$ $= \textbf{g}_i(\cdot,\cdot,,\tau)$, $\,\forall \tau \in\mathbb{R}$,
        \item $\textbf{f}_3$ and $\textbf{g}_3$ are locally Lipschitz continuous in $\textbf{x},\textbf{y}$ and jointly continuous in all of their arguments,
        \item ${ \int_{0}^{T}\textbf{f}_1(\cdot,\cdot,s)ds} = 0$, $\bm{\varphi}_0(\cdot)\in\mathcal{C}^{3}(\mathbb{R}^n;\mathbb{R}^m)$, and the matrix $ \textbf{A}$ is Hurwitz.
    \end{enumerate}
\end{asmp}
\begin{rem}
We restrict our treatment here to the periodic case for simplicity. The extension to the case when the vector fields are quasi-periodic is straightforward but the computations are more involved.
\end{rem} 
Next, consider the reduced order system:
\begin{gather} \label{eq:reduced_system}
    \dot{\tilde{\textbf{x}}}= \sqrt{\omega}\,\tilde{\textbf{f}}_1(\tilde{\textbf{x}},\omega t )+ \tilde{\textbf{f}}_2(\tilde{\textbf{x}},\omega t),\qquad\qquad
    \tilde{\textbf{y}}= \bm{\varphi}_0(\tilde{\textbf{x}})
\end{gather}
where the time-varying vector fields $\tilde{\textbf{f}}_i$ are defined by:
\begin{gather}
    \label{eq:red_avg_sys_dets_1}\tilde{\textbf{f}}_1(\textbf{x},\tau)= \textbf{f}_1(\textbf{x},\bm{\varphi}_0(\textbf{x}),\tau) \\
    \tilde{\textbf{f}}_2(\textbf{x},\tau)= \textbf{f}_2(\textbf{x},\bm{\varphi}_0(\textbf{x}),\tau) + \textbf{C}(\textbf{x},\bm{\varphi}_0(\textbf{x}),\tau)\bm{\varphi}_1(\textbf{x},\tau)\\
    \textbf{C}(\textbf{x},\textbf{y},\tau)= \partial_\textbf{w} \textbf{f}_1(\textbf{x},\textbf{w},\tau)|_{\textbf{w}=\textbf{y}} \\
    \bm{\varphi}_1(\textbf{x},\tau)= \left(\text{Id}-\text{e}^{T \textbf{A}}\right)^{-1}\textstyle\int_{0}^T \text{e}^{ (T-s)\textbf{A}}\,\textbf{b}_1(\textbf{x},s+\tau)ds \\
    \label{eq:red_avg_sys_dets_5}\textbf{b}_1(\textbf{x},\tau)= \textbf{g}_1(\textbf{x},\bm{\varphi}_0(\textbf{x}),\tau)-\partial_{\textbf{x}}\bm{\varphi}_0(\textbf{x})\, \textbf{f}_1(\textbf{x},\bm{\varphi}_0(\textbf{x}),\tau)
\end{gather}
and $\text{Id}$ is the identity matrix. In a companion paper \cite{abdelgalil2022recursive}, see also \cite{murdock1983some}, we showed that the higher order averaging theorem may be applied to the reduced order system (\ref{eq:reduced_system}) to obtain the reduced order averaged system:
\begin{gather}\label{eq:reduced_avg_system}
    \dot{\bar{\textbf{x}}}= \bar{\textbf{f}}(\bar{\textbf{x}}), \qquad\qquad \bar{\textbf{y}} = \bm{\varphi}_0(\textbf{x})
\end{gather}
where the vector field $\bar{\textbf{f}}(\cdot)$ is given by:
\begin{equation}
\begin{aligned}
    \bar{\textbf{f}}(\textbf{x})= \frac{1}{T}\int_0^{T}\bigg(\tilde{\textbf{f}}_2(\textbf{x},\tau_1)+\frac{1}{2}\Big[ \int_0^{\tau_1}&\tilde{\textbf{f}}_1(\textbf{x}, \tau_2) d\tau_2,\\
    &\tilde{\textbf{f}}_1(\textbf{x},\tau_1)\Big]\bigg)\,d\tau_1
\end{aligned}
\end{equation}
Under \thref{asmp:A}, we have the following theorem concerning the relation between the stability of the system (\ref{eq:orig_sys}) and the reduced order averaged system (\ref{eq:reduced_avg_system}):
\begin{thm}\thlabel{thm:A}
    Let \thref{asmp:A} be satisfied, and suppose that a compact subset $\mathcal{S}\subset\mathbb{R}^n$ is globally uniformly asymptotically stable for the reduced order averaged system (\ref{eq:reduced_avg_system}). Then, $\mathcal{S}$ is singularly semi-globally practically uniformly asymptotically stable for the original system (\ref{eq:orig_sys}).
\end{thm}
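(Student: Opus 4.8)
The plan is to decompose \thref{thm:A} into three ingredients: a Tikhonov-type boundary-layer estimate for the fast variable that also isolates the fast periodic oscillation, the higher-order averaging result of \cite{abdelgalil2022recursive} applied to the reduced system (\ref{eq:reduced_system}), and a converse-Lyapunov/robustness argument transferring the assumed global stability of $\mathcal{S}$ from (\ref{eq:reduced_avg_system}) to (\ref{eq:orig_sys}); all estimates must be made uniform over a prescribed compact set of initial data and over $\omega$ large.

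First I would put (\ref{eq:orig_sys}) in slow--fast form. Writing $\varepsilon = 1/\sqrt{\omega}$ and rescaling time by $s = \omega t$ turns (\ref{eq:orig_sys}) into $\textbf{x}' = \varepsilon \textbf{f}_1 + \varepsilon^2\textbf{f}_2 + \varepsilon^3\textbf{f}_3$ and $\textbf{y}' = \textbf{A}(\textbf{y}-\bm{\varphi}_0(\textbf{x})) + \varepsilon\textbf{g}_1 + \varepsilon^2\textbf{g}_2 + \varepsilon^3\textbf{g}_3$, with all $\textbf{f}_i,\textbf{g}_i$ now $T$-periodic in $s$, so that $\textbf{x}$ is slow, the relaxation of $\textbf{y}$ toward $\bm{\varphi}_0(\textbf{x})$ is fast, and the periodic forcing also lives on the fast scale. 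I would then change the fast variable by $\textbf{z} = \textbf{y} - \bm{\varphi}_0(\textbf{x}) - \varepsilon\,\bm{\varphi}_1(\textbf{x},s)$. Because $\bm{\varphi}_1$ is, by construction, the $T$-periodic solution of $\partial_\tau\bm{\varphi}_1 = \textbf{A}\bm{\varphi}_1 + \textbf{b}_1$ with $\textbf{b}_1$ given by (\ref{eq:red_avg_sys_dets_5}), the order-$\varepsilon$ terms of the $\textbf{z}$-equation that do not already vanish at $\textbf{z}=0$ cancel exactly, leaving $\textbf{z}' = \textbf{A}\textbf{z} + \varepsilon\,O(|\textbf{z}|) + \varepsilon^2 O(1)$, i.e. $\dot{\textbf{z}} = \omega\textbf{A}\textbf{z} + \sqrt{\omega}\,O(|\textbf{z}|) + O(1)$ in $t$-time. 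Since $\textbf{A}$ is Hurwitz, for $\omega$ large the linear contraction dominates and a comparison argument gives the boundary-layer bound $|\textbf{z}(t)| \le \kappa\,\text{e}^{-\lambda\omega(t-t_0)/2}\,|\textbf{z}_0| + c/\omega$, with $\kappa,\lambda,c$ depending only on the compact set. Substituting $\textbf{y} = \bm{\varphi}_0(\textbf{x}) + \varepsilon\bm{\varphi}_1(\textbf{x},\omega t) + \textbf{z}$ back into the $\textbf{x}$-equation and expanding $\textbf{f}_1,\textbf{f}_2$ in the second argument about $\bm{\varphi}_0(\textbf{x})$, the definitions (\ref{eq:red_avg_sys_dets_1})--(\ref{eq:red_avg_sys_dets_5}) produce $\dot{\textbf{x}} = \sqrt{\omega}\,\tilde{\textbf{f}}_1(\textbf{x},\omega t) + \tilde{\textbf{f}}_2(\textbf{x},\omega t) + \sqrt{\omega}\,\textbf{C}(\textbf{x},\bm{\varphi}_0(\textbf{x}),\omega t)\,\textbf{z} + O(\sqrt{\omega}|\textbf{z}|^2) + O(1/\sqrt{\omega})$; that is, $\textbf{x}$ solves the reduced system (\ref{eq:reduced_system}) plus a disturbance $\textbf{d}(t)$ which is $O(1/\sqrt{\omega})$ once the fast transient has decayed but may reach $O(\sqrt{\omega})$ during it. The observation that renders $\textbf{d}$ harmless is that it splits into a persistent part of size $O(1/\sqrt{\omega})$ and a transient part supported on an interval of length $O((\log\omega)/\omega)$ whose time integral is bounded by $\sqrt{\omega}\int \kappa\,\text{e}^{-\lambda\omega(t-t_0)/2}|\textbf{z}_0|\,dt = O(|\textbf{z}_0|/\sqrt{\omega})$, so the transient only shifts $\textbf{x}$ by $O(1/\sqrt{\omega})$.

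For the stability transfer I would use that, by \cite{abdelgalil2022recursive}, the reduced system (\ref{eq:reduced_system}) is brought into the averaged system (\ref{eq:reduced_avg_system}) by a near-identity averaging transformation that is $O(1/\sqrt{\omega})$-close to the identity; composing a smooth converse Lyapunov function $V$ for $\mathcal{S}$ — which exists since $\mathcal{S}$ is GUAS for (\ref{eq:reduced_avg_system}) and $\bar{\textbf{f}}$ is locally Lipschitz under \thref{asmp:A} — with that transformation yields a Lyapunov function establishing that $\mathcal{S}$ is semi-globally practically uniformly asymptotically stable for (\ref{eq:reduced_system}), together with the usual robustness: the estimate persists under an additive disturbance $\textbf{d}$ with $\|\textbf{d}\|_\infty$ small (a total-stability/ISS-type margin absorbing the persistent part of $\textbf{d}$) and under an $O(1/\sqrt{\omega})$ shift of the initial condition (absorbing the transient part). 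Chaining this practical $\mathcal{KL}$-estimate for $\textbf{x}$ with the boundary-layer bound for $\textbf{z}$, and undoing the explicit change of the fast variable and the near-identity averaging transformation (both jointly continuous in all variables, the latter uniformly close to the identity), gives exactly the conclusion: for every compact set of initial data $(\textbf{x}_0,\textbf{y}_0)$ and every $\delta>0$ there is $\omega^\star$ such that for all $\omega\ge\omega^\star$ the $\textbf{x}$-component of (\ref{eq:orig_sys}) is ultimately $\delta$-close to $\mathcal{S}$ with a uniform $\mathcal{KL}$ transient, and $\textbf{y}$ is $\delta$-close to $\bm{\varphi}_0(\textbf{x})$ after a fast transient of length $O((\log\omega)/\omega)$. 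A priori forward completeness and boundedness on the relevant compact set, needed to invoke the local Lipschitz bounds, follow from the composite Lyapunov inequalities themselves by the standard bootstrap.

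I expect the main obstacle to be the absence of time-scale separation between the fast stable boundary layer (rate $\omega$) and the high-amplitude periodic forcing (also on the $\omega t$ scale): a textbook Tikhonov/averaging decoupling is unavailable because the slow vector field is not uniformly bounded in $\omega$ — the coupling term $\sqrt{\omega}\,\textbf{C}(\textbf{x},\bm{\varphi}_0(\textbf{x}),\omega t)\textbf{z}$ is $O(\sqrt{\omega})$ along the boundary layer. The heart of the argument is therefore the quantitative claim that this coupling is time-integrable with an $O(1/\sqrt{\omega})$ bound uniform over the compact initial set, which requires tracking simultaneously the decay rate $\lambda\omega$ of the boundary layer, the $O(\sqrt{\omega})$ amplitude it multiplies, and the role of the explicit corrector $\bm{\varphi}_1$ in reducing the residual of the $\textbf{z}$-equation from $O(\sqrt{\omega})$ to $O(\sqrt{\omega}|\textbf{z}|)+O(1)$. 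Making every constant depend only on the prescribed compact set, and selecting a single $\omega^\star$ that works, is the bulk of the remaining technical bookkeeping.
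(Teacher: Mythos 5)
Your proposal is sound and shares the paper's skeleton --- the rescaling $\varepsilon=1/\sqrt{\omega}$, $\tau=\omega t$, the near-identity corrector added to the standard shift $\textbf{y}-\bm{\varphi}_0(\textbf{x})$, the Hurwitz boundary-layer estimate, and the outsourcing of the ``reduced $\to$ averaged'' step to \cite{abdelgalil2022recursive} --- but it diverges from the paper at the decisive step. The paper's proof is trajectory-based: its central technical result (\thref{lem:A}, lifted to \thref{prop:A}) is a finite-horizon closeness estimate $\lVert\textbf{x}(\tau)-\tilde{\textbf{x}}(\tau)\rVert<D$ on $[0,t_f/\varepsilon^2]$, proved by a contradiction argument on the first exit time $\tau_D$, a splitting of $[\tau_\varepsilon,\tau]$ into periods of length $T$, Hadamard's lemma, and an integration by parts that exploits the zero mean of $\tilde{\textbf{f}}_1$ to beat the naive Gr\"onwall factor $\mathrm{e}^{O(1/\varepsilon)}$ down to $\mathrm{e}^{O(1)}$; the three clauses of \thref{defn:A} are then verified Moreau--Aeyels style. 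You never compare $\textbf{x}$ to $\tilde{\textbf{x}}$ at all: you run a converse Lyapunov function for $\mathcal{S}$, pulled back through the averaging transformation, directly along the true trajectory and absorb the slow--fast coupling as a disturbance that is $O(\varepsilon^3)$ in $\tau$-time after the transient and time-integrable to $O(\varepsilon\log(1/\varepsilon))$ during it. That buys you the infinite-horizon clauses of \thref{defn:A} in one stroke and hides the zero-mean cancellation inside the transformation from the companion paper, at the price of needing the smooth converse Lyapunov theorem for GUAS compact sets and a careful check that the $O(\varepsilon^2)$ decrease of the composed, time-dependent Lyapunov function dominates both the averaging remainder and the persistent disturbance outside a shrinking neighborhood of $\mathcal{S}$ --- which is exactly where the paper instead deploys its Gr\"onwall machinery. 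Two smaller remarks. First, you use only the corrector $\bm{\varphi}_1$ where the paper uses $\bm{\varphi}_1$ and $\bm{\varphi}_2$; this still works, but only because you implicitly use the variation-of-constants bound for $\textbf{z}'=\textbf{A}\textbf{z}+O(\varepsilon^2)$ giving the ultimate bound $c/\omega$ --- the paper's cruder quadratic Lyapunov estimate would give only $O(\varepsilon)$ with one corrector, which is \emph{not} enough (the coupling would then integrate to $O(1)$ over the $O(1/\varepsilon^2)$ horizon), and this is precisely why the paper introduces $\bm{\varphi}_2$ to push the residual to $O(\varepsilon^3)$ and the bound to $\alpha\varepsilon^{3/2}$. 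Second, your final statement should be checked against the paper's \thref{defn:A}, whose second clause tracks the fast variable on the separate time scale $t_2\ge T_f/\omega$; your boundary-layer bound delivers this, but it needs to be said explicitly when you undo the near-identity part of the shift.
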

\begin{rem}
    We note that our definition of singular semi-global practical uniform asymptotic stability, which can be found in the appendix A, is slightly different from that in \cite{durr2015singularly}. The proof of \thref{thm:A} proceeds by establishing each part of \thref{defn:A} similar to \cite{moreau2000practical,durr2015singularly}, relying on \thref{prop:A} which we can be found along with a proof sketch in the appendix B. We omit the proof of the theorem from this manuscript to be included in the extended version. 
\end{rem}

\section{3D Source Seeking}
Source seeking is the problem of locating a target that emits a scalar measurable signal, typically without global positioning information \cite{cochran20093,krstic2008extremum}. Interestingly, microorganisms are routinely faced with the source seeking problem. In particular, sea urchin sperm cells seek the egg by swimming up the gradient of the concentration field  of a chemical secreted by the egg \cite{abdelgalil2021sperm,friedrich2007chemotaxis}. The sperm cells do so by swimming in helical paths that dynamically align with the gradient. In this section, we propose a bio-inspired 3D source seeking algorithm for rigid bodies with a non-collocated signal strength sensor that partially mimics the strategy of sperm cells for seeking the egg.

The kinematics of a rigid body in 3D space are given by:
\begin{align}
    \label{eq:kin}\dot{\textbf{p}} &= \textbf{R}\textbf{v}, & \dot{\textbf{R}} &= \textbf{R}\widehat{\bm{\Omega}}
\end{align}
where $\textbf{p}$ denotes the position of a designated point on the body with respect to a fixed frame of reference, $\textbf{R}$ relates the body frame to the fixed frame, and $\textbf{v}$ and $\bm{\Omega}$ are the linear and angular velocities in body coordinates, respectively. The map $\widehat{\bullet}:\mathbb{R}^3\rightarrow\mathbb{R}^{3\times 3}$ takes a vector $\bm{\Omega}=\left[\Omega_1,\Omega_2,\Omega_3\right]^\intercal \in \mathbb{R}^3$ to the corresponding skew symmetric matrix. 

We assume a vehicle model in which the linear and angular velocity vectors are given in body coordinates by:
\begin{align}
    \textbf{v}&= v\textbf{e}_1, & \bm{\Omega} &= \Omega_\parallel\textbf{e}_1 + \Omega_\perp \textbf{e}_3
\end{align} 
where $\textbf{e}_i$ for $i\in\{1,2,3\}$ are the standard unit vectors. \begin{rem}
This model is a natural extension of the unicycle model to the 3D setting. It is well known that this system is controllable using depth one Lie brackets \cite{leonard1993averaging}.
\end{rem}

Let $c:\mathbb{R}^3\rightarrow\mathbb{R}$ be the signal strength field emitted by the source, and consider the case of a non-collocated signal strength sensor that is mounted at $\textbf{p}_s$, where:
\begin{align}\label{eq:sens_loc}
    \textbf{p}_s&= \textbf{p} + r \textbf{R}\textbf{e}_2
\end{align}
\begin{asmp}\thlabel{asmp:B}
Suppose that the signal strength field $c\in \mathcal{C}^3(\mathbb{R}^3;\mathbb{R})$ is radially unbounded, $\exists!\textbf{p}^*\in \mathbb{R}^3$ such that $\nabla c(\textbf{p})= 0 \iff \textbf{p}=\textbf{p}^*$, and it satisfies $c(\textbf{p}^*)-c(\textbf{p})\leq \kappa \lVert \nabla c(\textbf{p})\lVert^2, \, \forall \textbf{p}\in\mathbb{R}^n$ and $\kappa>0$.\\
\end{asmp}

Now, consider the following control law:
\begin{gather}
\label{eq:ctrl_law_1}v = \sqrt{4\omega}\cos(2\omega t - c(\textbf{p}_s))\\
    \label{eq:ctrl_law_2}\dot{\textbf{y}} = \omega\,\textbf{A}\,\textbf{y} + \omega\,\textbf{B}\, c(\textbf{p}_s) \\
    \Omega_\perp  = \sqrt{\omega}\,\textbf{C}\, \textbf{y}, \qquad
    \label{eq:ctrl_law_3}\Omega_\parallel = \omega
\end{gather}
where $\textbf{y}\in\mathbb{R}^2$, and $\textbf{A},\textbf{B},\textbf{C}$ are given by:
\begin{align}
    \label{eq:flter_mats}\textbf{A} &= \left[\begin{array}{cc} -1 & 1\\ 0 & -1 \end{array}\right], & \textbf{B}&= \left[\begin{array}{c} 0\\ 1\end{array}\right], & \textbf{C}&= \left[\begin{array}{cc} -1 & 1\end{array}\right]
\end{align}
The static part of this controller, i.e. equation (\ref{eq:ctrl_law_1}) is a 1D extremum seeking control law \cite{scheinker2014extremum}. Note that other choices of this control law are possible \cite{grushkovskaya2018class}. 
The dynamic part of this controller, i.e. the equations (\ref{eq:ctrl_law_2})-(\ref{eq:ctrl_law_3}), is a narrow band-pass filter centered around the frequency $\omega$. The motivation to consider this setup is that in the presence of noise, a narrow band-pass filter centered around the dither frequency $\omega$ optimally extracts the gradient information in the measured output while attenuating noise. In addition, assume that the distance $r$ specifying the offset of the sensor from the center of the frame is such that $r=1/\sqrt{\omega}$. This assumption may seem artificial at first glance, though its implication is clear; we require that as the frequency of oscillation $\omega$ tends to $\infty$, the distance $r$ from the center of the vehicle is small enough so as not to amplify unwanted nonlinearities in the signal strength field. Alternatively, one may consider this assumption as a ``distinguished limit'' \cite{kevorkian2012multiple} for the perturbation calculation in the presence of the two parameters $\omega$ and $r$. Under these assumptions, we have the following proposition:
\begin{prop}
Let \thref{asmp:B} be satisfied, and let $r=1/\sqrt{\omega}$. Then, the compact subset $\mathcal{S}=\{\textbf{p}^*\}\times \text{SO}(3)$ is singularly semi-globally practically uniformly asymptotically stable for the system defined by equations (\ref{eq:kin})-(\ref{eq:flter_mats}).
\end{prop}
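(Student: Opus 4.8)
\emph{Proof sketch.} The plan is to put the closed loop into the form (\ref{eq:orig_sys}), verify \thref{asmp:A}, compute the reduced order averaged system explicitly, show that $\mathcal{S}$ is globally uniformly asymptotically stable for it, and then invoke \thref{thm:A}; the substance is the second step.

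\emph{Step 1 (normal form).} I would first factor out the fast spin about the body $\textbf{e}_1$ axis via the change of coordinates $\textbf{R}=\textbf{Q}\,\text{e}^{\omega t\widehat{\textbf{e}}_1}$. Since $\widehat{\textbf{e}}_1$ commutes with $\text{e}^{\omega t\widehat{\textbf{e}}_1}$ and $\text{e}^{\omega t\widehat{\textbf{e}}_1}\widehat{\textbf{e}}_3\,\text{e}^{-\omega t\widehat{\textbf{e}}_1}=\cos(\omega t)\widehat{\textbf{e}}_3-\sin(\omega t)\widehat{\textbf{e}}_2$, the kinematics become $\dot{\textbf{p}}=\sqrt{4\omega}\cos(2\omega t-c(\textbf{p}_s))\,\textbf{Q}\textbf{e}_1$ and $\dot{\textbf{Q}}=\sqrt{\omega}\,(\textbf{C}\textbf{y})\,\textbf{Q}\big(\cos(\omega t)\widehat{\textbf{e}}_3-\sin(\omega t)\widehat{\textbf{e}}_2\big)$, while $\textbf{p}_s=\textbf{p}+\tfrac1{\sqrt\omega}\textbf{Q}\big(\cos(\omega t)\textbf{e}_2+\sin(\omega t)\textbf{e}_3\big)$ because $r=1/\sqrt\omega$. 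Expanding $c(\textbf{p}_s)$ in powers of $1/\sqrt\omega$ about $\textbf{p}$ and setting $\textbf{x}=(\textbf{p},\textbf{Q})$, the system takes the form (\ref{eq:orig_sys}) with $\textbf{y}$ the singularly perturbed variable, $\bm{\varphi}_0(\textbf{x})=-\textbf{A}^{-1}\textbf{B}\,c(\textbf{p})$ (well defined since $\textbf{A}$ is Hurwitz), $T=2\pi$, the leading periodic vector fields
\begin{align*}
\textbf{f}_1(\textbf{x},\textbf{y},\tau)&=\big(\,2\cos(2\tau-c(\textbf{p}))\,\textbf{Q}\textbf{e}_1,\ (\textbf{C}\textbf{y})\,\textbf{Q}(\cos\tau\,\widehat{\textbf{e}}_3-\sin\tau\,\widehat{\textbf{e}}_2)\,\big),\\
\textbf{g}_1(\textbf{x},\textbf{y},\tau)&=\textbf{B}\,\nabla c(\textbf{p})^\intercal\textbf{Q}(\cos\tau\,\textbf{e}_2+\sin\tau\,\textbf{e}_3),
\end{align*}
and $\textbf{f}_2,\textbf{g}_2,\textbf{f}_3,\textbf{g}_3$ the induced higher-order and remainder terms. \thref{asmp:A} then holds by inspection: $c\in\mathcal{C}^3$ supplies the differentiability, $\textbf{A}$ has the double eigenvalue $-1$ so it is Hurwitz and $\text{Id}-\text{e}^{T\textbf{A}}$ is invertible, and $\int_0^{2\pi}\textbf{f}_1(\cdot,\cdot,s)\,ds=0$ since $\cos(2\tau-c(\textbf{p}))$, $\cos\tau$, $\sin\tau$ all average to zero. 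Here $\textbf{x}$ evolves on $\mathbb{R}^3\times\text{SO}(3)$; \thref{thm:A} extends to this setting unchanged, its only non-local hypothesis being global asymptotic stability of the averaged flow.

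\emph{Step 2 (reduced dynamics, to second order).} The band-pass filter (\ref{eq:ctrl_law_2})--(\ref{eq:flter_mats}) has zero DC gain, $\textbf{C}\bm{\varphi}_0(\textbf{x})=-\textbf{C}\textbf{A}^{-1}\textbf{B}\,c(\textbf{p})=0$, so $\tilde{\textbf{f}}_1=\big(2\cos(2\tau-c(\textbf{p}))\textbf{Q}\textbf{e}_1,\,0\big)$ has no rotational component and the entire rotational part of the reduced vector field is produced by $\tilde{\textbf{f}}_2$ — this is exactly why second order averaging is needed. A harmonic-balance computation of the $2\pi$-periodic $\bm{\varphi}_1$ gives $\textbf{C}\bm{\varphi}_1(\textbf{x},\tau)=\tfrac12(\gamma_2\cos\tau+\gamma_3\sin\tau)$ up to harmonics that vanish under the subsequent time-averaging, where $\bm{\gamma}:=\textbf{Q}^\intercal\nabla c(\textbf{p})$ (the gain $\tfrac12$ being the real value of the filter's transfer function at the dither frequency). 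Substituting into (\ref{eq:red_avg_sys_dets_1})--(\ref{eq:red_avg_sys_dets_5}) and using that $\tilde{\textbf{f}}_1$ is tangent to the $\textbf{p}$-factor alone, so the Lie-bracket term in the formula for $\bar{\textbf{f}}$ has no $\textbf{Q}$-component, the time averages collapse to
\begin{align*}
\dot{\bar{\textbf{p}}}&=\big((\bar{\textbf{Q}}\textbf{e}_1)^\intercal\nabla c(\bar{\textbf{p}})\big)\,\bar{\textbf{Q}}\textbf{e}_1, & \dot{\bar{\textbf{Q}}}&=\tfrac14\,\bar{\textbf{Q}}\,\widehat{\,\textbf{e}_1\times(\bar{\textbf{Q}}^\intercal\nabla c(\bar{\textbf{p}}))\,}.
\end{align*}
Writing $\textbf{b}:=\bar{\textbf{Q}}\textbf{e}_1$ for the heading in the world frame, the vector triple product identity turns the second equation into $\dot{\textbf{b}}=\tfrac14(\text{Id}-\textbf{b}\textbf{b}^\intercal)\nabla c(\bar{\textbf{p}})$ with $|\textbf{b}|\equiv1$: the heading aligns with the gradient while the position advances along the heading.

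\emph{Step 3 (stability of the averaged flow, conclusion).} Take $V(\bar{\textbf{p}}):=c(\textbf{p}^*)-c(\bar{\textbf{p}})$; by \thref{asmp:B} it is positive definite and radially unbounded with respect to $\mathcal{S}$ and satisfies $V\le\kappa\|\nabla c\|^2$, and along the averaged flow $\dot V=-\big(\textbf{b}^\intercal\nabla c(\bar{\textbf{p}})\big)^2\le0$. On any trajectory lying in $\{\dot V=0\}=\{\textbf{b}^\intercal\nabla c(\bar{\textbf{p}})=0\}$ the position is constant, say $\textbf{p}_0$, and differentiating $\textbf{b}^\intercal\nabla c(\textbf{p}_0)\equiv0$ gives $0=\dot{\textbf{b}}^\intercal\nabla c(\textbf{p}_0)=\tfrac14\|\nabla c(\textbf{p}_0)\|^2$, whence $\nabla c(\textbf{p}_0)=0$ and $\textbf{p}_0=\textbf{p}^*$; the largest invariant set in $\{\dot V=0\}$ is thus $\mathcal{S}$. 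LaSalle's invariance principle together with radial unboundedness of $V$ then yields that $\mathcal{S}$ is globally uniformly asymptotically stable for the reduced order averaged system (and $V\le\kappa\|\nabla c\|^2$ gives an exponential rate along the aligned, gradient-ascent component). Since $\mathcal{S}=\{\textbf{p}^*\}\times\text{SO}(3)$ is compact, \thref{thm:A} applies and gives singular semi-global practical uniform asymptotic stability of $\mathcal{S}$ for (\ref{eq:kin})--(\ref{eq:flter_mats}); the coordinate change of Step 1 is a time-varying rotation, so it preserves distances to $\mathcal{S}$ and compactness of initial-condition sets and the statement transfers back. The main obstacle is Step 2: because the leading-order rotational command vanishes, the rotational drift of the averaged system must be recovered from the $O(1)$ correction $\tilde{\textbf{f}}_2$ through the periodic filter response $\bm{\varphi}_1$ and the translational drift from the Lie-bracket term, and one must check that the surviving averages assemble into the gradient-alignment flow above.
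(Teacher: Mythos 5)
Your proposal is correct and takes essentially the same route as the paper: factor out the fast spin via $\textbf{R}=\textbf{Q}\,\mathrm{e}^{\omega t\widehat{\textbf{e}}_1}$, expand $c(\textbf{p}_s)$ in powers of $1/\sqrt{\omega}$ to cast the closed loop into the form (\ref{eq:orig_sys}), compute the reduced order averaged system (your heading dynamics $\dot{\textbf{b}}=\tfrac14(\mathrm{Id}-\textbf{b}\textbf{b}^\intercal)\nabla c$ agrees with the paper's componentwise equations), run the identical LaSalle argument, and invoke \thref{thm:A}. The only differences are cosmetic: the paper first embeds $\textbf{Q}$ into $\mathbb{R}^9$ restricted to the invariant submanifold $\mathcal{M}$ so that \thref{thm:A} applies literally, where you assert it extends to the manifold setting, and your Step 2 actually carries out the $\bm{\varphi}_0$, $\bm{\varphi}_1$, and second-order averaging computation that the paper leaves as an exercise.
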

\begin{proof}
Let $\textbf{R}_0 =\text{exp}\left(\omega t \,\widehat{\textbf{e}}_1 \right),\,\textbf{Q} = \textbf{R}\textbf{R}_0^\intercal$, and compute:
\begin{align}
    \dot{\textbf{Q}} &= \dot{\textbf{R}}\textbf{R}_0^\intercal +  \textbf{R}\dot{\textbf{R}}_0^\intercal = \Omega_\perp\textbf{R}\widehat{\textbf{e}}_3\textbf{R}_0^\intercal \\
    &=  \Omega_\perp\textbf{Q}\textbf{R}_0 \widehat{\textbf{e}}_3\textbf{R}_0^\intercal = \Omega_\perp\textbf{Q}\widehat{\textbf{R}_0 \textbf{e}_3}
\end{align}
Let $\bm{\Lambda}(\textbf{y},\omega t)= \textbf{C}\, \textbf{y}\,\textbf{R}_0\textbf{e}_3$ and observe that:
\begin{gather}
    \dot{\textbf{p}}= \textbf{R}\textbf{v} = \textbf{R}\textbf{R}_0^\intercal \textbf{R}_0 \textbf{v} = v \,\textbf{Q} \textbf{R}_0 \textbf{e}_1 = v\,\textbf{Q} \textbf{e}_1 \\
    \dot{\textbf{Q}}= \sqrt{\omega}\,\textbf{Q} \widehat{\bm{\Lambda}}(\textbf{y},\omega t)
\end{gather}
To simplify the presentation, we embed $\text{SO}(3)$ into $\mathbb{R}^9$ by partitioning the matrix $\textbf{Q}=[\textbf{q}_1,\, \textbf{q}_2,\, \textbf{q}_3 ]$, and defining the state vector $\textbf{q}=[\textbf{q}_1^\intercal,\, \textbf{q}_2^\intercal,\, \textbf{q}_3^\intercal ]^\intercal$. Restrict the initial conditions for $\textbf{q}$ to lie on the compact submanifold $\mathcal{M} = \left\{\textbf{q}_i\in\mathbb{R}^{3}:\, \textbf{q}_i^\intercal\textbf{q}_j=\delta_{ij},~\textbf{q}_i\times\textbf{q}_j = \epsilon_{ijk}\textbf{q}_k  \right\}$, 
where $\delta_{ij}$ is the Kronecker symbol and $\epsilon_{ijk}$ is the Levi-Civita symbol.
On $\mathbb{R}^3\times\mathcal{M}\times\mathbb{R}^2$, the system is governed by:
\begin{align}
    \label{eq:src_seek_sys_1} \dot{\textbf{p}} &= \sqrt{4\omega}\cos(2\omega t - c(\textbf{p}_s))\,\textbf{q}_1\\
    \label{eq:src_seek_sys_2}\dot{\textbf{q}}_i &= \sqrt{\omega}\sum\limits_{j,k=1}^3\Lambda_j(\textbf{y},\omega t)\epsilon_{ijk}\textbf{q}_k \\
    \label{eq:src_seek_sys_3}\dot{\textbf{y}} &=\omega \left(\textbf{A}\,\textbf{y} + \,\textbf{B}\, c(\textbf{p}_s) \right)
\end{align}
For more details on this embedding, see the companion paper \cite{abdelgalil2022recursive}. The signal strength field can be expanded as a series in $1/\sqrt{\omega}$ using Taylor's theorem:
\begin{gather*}
    c(\textbf{p}_s)=\,c(\textbf{p})+\frac{1}{\sqrt{\omega}} \nabla c(\textbf{p})^\intercal (\cos(\omega t) \textbf{q}_2 + \sin(\omega t) \textbf{q}_3) \\
    +\frac{1}{2\omega }\nabla^2 c(\textbf{p})[(\cos(\omega t) \textbf{q}_2 + \sin(\omega t) \textbf{q}_3)] \\
    + \frac{1}{\omega \sqrt{\omega}}\bm{\rho}(\textbf{p},\textbf{p}_s,\omega t,1/\sqrt{\omega})
\end{gather*}
where the remainder $\bm{\rho}$ is Lipschitz continuous in all of its arguments, and $\nabla^2c(\textbf{p})[\textbf{w}]=\textbf{w}^\intercal \nabla^2c(\textbf{p}) \textbf{w}$. 
Now, observe that the system governed by the equations (\ref{eq:src_seek_sys_1})-(\ref{eq:src_seek_sys_3}) belongs to the class of systems described by (\ref{eq:orig_sys}). Hence, we may employ \thref{thm:A} in analyzing the stability of the system. In order to proceed, the reduced order averaged system needs to be computed. Due to space constraints, we leave the computations as an exercise for the interested reader in the light of equations (\ref{eq:red_avg_sys_dets_1})-(\ref{eq:red_avg_sys_dets_5}), and we provide only the end result of the computation:
\noindent\begin{minipage}{.5\linewidth}
\begin{gather*}
    \dot{\bar{\textbf{p}}} = \vphantom{\frac{1}{4}}\bar{\textbf{q}}_1\bar{\textbf{q}}_1^\intercal \nabla c(\bar{\textbf{p}}) \\
    \dot{\bar{\textbf{q}}}_2 = -\frac{1}{4}\bar{\textbf{q}}_1\bar{\textbf{q}}_3^\intercal \nabla c(\bar{\textbf{p}}),
\end{gather*}
\end{minipage}%
\begin{minipage}{.5\linewidth}
\begin{gather*}
    \dot{\bar{\textbf{q}}}_1= \frac{1}{4}(\bar{\textbf{q}}_2\bar{\textbf{q}}_2^\intercal + \bar{\textbf{q}}_3\bar{\textbf{q}}_3^\intercal)\nabla c(\bar{\textbf{p}}) \\
    \dot{\bar{\textbf{q}}}_3 = -\frac{1}{4}\bar{\textbf{q}}_1\bar{\textbf{q}}_2^\intercal \nabla c(\bar{\textbf{p}})
\end{gather*}
\end{minipage}\vspace*{0.1in} \\
Equivalently, this system can be written as:
\begin{gather}\label{eq:exmp_red_avg_sys_1}
    \dot{\bar{\textbf{p}}} = \bar{\textbf{Q}}\textbf{e}_1\textbf{e}_1^\intercal \bar{\textbf{Q}}^\intercal \nabla c(\bar{\textbf{p}}) \\
    \label{eq:exmp_red_avg_sys_2}\dot{\bar{\textbf{Q}}} = \bar{\textbf{Q}} \widehat{\bar{\bm{\Lambda}}}(\textbf{p},\textbf{Q})
\end{gather}
where the average angular velocity vector $\bar{\bm{\Lambda}}$ is given by:
\begin{align}
    \label{eq:avg_rot_vel} \bar{\bm{\Lambda}}(\textbf{p},\textbf{Q})&= \frac{1}{4}\textbf{Q}^\intercal \nabla c(\textbf{p})\times\textbf{e}_1
\end{align}
We claim that the compact subset $\mathcal{S}$ is globally uniformly asymptotically stable for the reduced order averaged system (\ref{eq:exmp_red_avg_sys_1})-(\ref{eq:exmp_red_avg_sys_2}). To prove this claim, we use the negative of the signal strength field as a Lyapunov function $V_c(\textbf{p})=c(\textbf{p}^*)-c(\textbf{p})$. Observe that the system (\ref{eq:exmp_red_avg_sys_1})-(\ref{eq:exmp_red_avg_sys_2}) is autonomous, and so the function $V_c$ is indeed a Lyapunov function for the compact subset $\mathcal{S}$ due to \thref{asmp:B} \cite{khalil2002nonlinear}. We proceed to compute the derivative of $V_c$:
\begin{gather}
    \dot{V}_c = -\nabla c(\bar{\textbf{p}})^\intercal \bar{\textbf{Q}}\textbf{e}_1\textbf{e}_1^\intercal \bar{\textbf{Q}}^\intercal \nabla c(\bar{\textbf{p}})\leq 0
\end{gather}
Now, consider the subset $\mathcal{N}=\{(\textbf{p},\textbf{Q})\in\mathbb{R}^3\times \text{SO}(3): \dot{V}_c=0\}$, and observe that $\mathcal{S}\subset \mathcal{N}$, and that $\mathcal{S}$ is an invariant subset of the reduced order averaged system (\ref{eq:exmp_red_avg_sys_1})-(\ref{eq:exmp_red_avg_sys_2}). Suppose that a trajectory $(\bar{\textbf{p}}(t),\bar{\textbf{Q}}(t))$ of the system (\ref{eq:exmp_red_avg_sys_1})-(\ref{eq:exmp_red_avg_sys_2}) exists such that $(\bar{\textbf{p}}(t),\bar{\textbf{Q}}(t))\in \mathcal{N}\backslash\mathcal{S},\, \forall t\in I$, where $I$ is the maximal interval of existence and uniqueness of the trajectory. Such a trajectory must satisfy:
\begin{align}
    \nabla c(\bar{\textbf{p}}(t))^\intercal \bar{\textbf{Q}}(t)\textbf{e}_1 = 0,\quad \forall t\in I
\end{align}
The differentiability of the trajectories allows us to compute the derivative of this identity and obtain that:
\begin{align}
    \frac{d}{dt}\big(\nabla c(\bar{\textbf{p}}(t))^\intercal \bar{\textbf{Q}}(t)\textbf{e}_1\big) = 0,\quad \forall t\in I
\end{align}
which simplifies to:
\begin{align}
    \label{eq:second_identity}\nabla c(\bar{\textbf{p}}(t))^\intercal \bar{\textbf{Q}}(t)(\bar{\bm{\Lambda}}(\bar{\textbf{p}}(t),\bar{\textbf{Q}}(t))\times \textbf{e}_1) = 0
\end{align}
Recalling equation (\ref{eq:avg_rot_vel}), we see that:
\begin{gather}
    \bar{\bm{\Lambda}}(\bar{\textbf{p}}(t),\bar{\textbf{Q}}(t))\times \textbf{e}_1 = \frac{1}{4}(\text{Id}-\textbf{e}_1\textbf{e}_1^\intercal)\bar{\textbf{Q}}(t)^\intercal \nabla c(\bar{\textbf{p}}(t))\\
    = \frac{1}{4}\bar{\textbf{Q}}(t)^\intercal \nabla c(\bar{\textbf{p}}(t))
\end{gather}
Hence, the equation (\ref{eq:second_identity}) necessitates that:
\begin{align}
    \lVert \nabla c(\bar{\textbf{p}}(t)) \lVert^2 = 0,\quad \forall t\in I
\end{align}
which is clearly in contradiction with \thref{asmp:B}. Accordingly, it follows from LaSalle's Invariance principle \cite[Corollary 4.2 to Theorem 4.4]{khalil2002nonlinear} that the compact subset $\mathcal{S}$ is globally uniformly asymptotically stable for the system (\ref{eq:exmp_red_avg_sys_1})-(\ref{eq:exmp_red_avg_sys_2}). Hence, we conclude by \thref{thm:A} that the subset $\mathcal{S}$ is singularly semi-globally practically uniformly asymptotically stable for the original system defined by (\ref{eq:kin})-(\ref{eq:flter_mats}). 
\end{proof}
\begin{figure*}[t]
    \centering \includegraphics[width=1\textwidth]{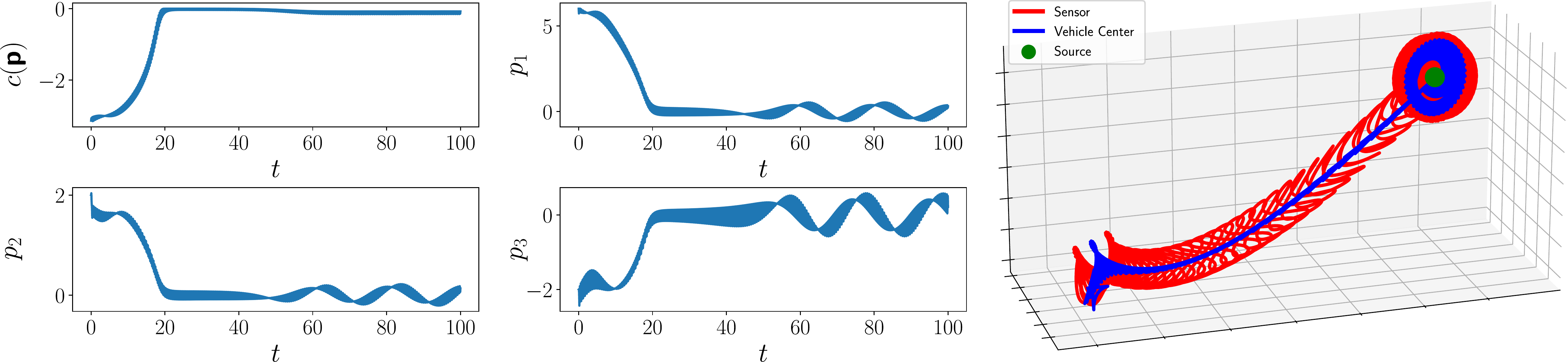}
    \caption{Numerical results: the history of the signal strength at the vehicle center and the position coordinates (left), and the 3D spatial trajectory (right)}
    \label{fig:num_res}
\end{figure*}
\begin{rem}
If we attempt to apply the framework of singularly pertubed Lie Bracket Approximation introduced in \cite{durr2015singularly} to the system (\ref{eq:src_seek_sys_1})-(\ref{eq:src_seek_sys_3}), then the quasi-steady state of the system will be $\textbf{y} = [c(\textbf{p}_s),c(\textbf{p}_s)]^\intercal$. Hence, according to \cite{durr2015singularly}, the reduced order system is:
\begin{gather}
    \dot{\textbf{p}} = \sqrt{4\omega}\cos(2\omega t - c(\textbf{p}_s))\,\textbf{q}_1,\qquad 
    \dot{\textbf{q}}_i =0
\end{gather}
which yields the Lie Bracket system:
\begin{gather}
    \dot{\bar{\textbf{p}}} = \bar{\textbf{Q}}\textbf{e}_1\textbf{e}_1^\intercal \bar{\textbf{Q}}^\intercal \nabla c(\bar{\textbf{p}}), \qquad 
    \dot{\bar{\textbf{Q}}} =0
\end{gather}
It is clear that the compact subset $\mathcal{S}$ is not asymptotically stable for the Lie Bracket system, and so the framework in \cite{durr2015singularly} does not capture the stability of this system. \end{rem}
\section{Numerical Simulations}
We demonstrate our results by providing a numerical example. Consider the signal strength field given by $c(\textbf{p})= -\log\left(1+\textbf{p}^\intercal\textbf{p}/2\right)$, which represents a stationary source located at the origin. We take the initial conditions as $\textbf{p}(0)=[6,~2,~-2]^\intercal$ and $\textbf{R}(0)=\textbf{I}_{3\times3}$, and the frequency as $\omega=4\pi$. The numerical simulations are shown in Fig.(\ref{fig:num_res}). Observe that the behavior near the source is nontrivial, i.e. there is a limit cycle. However, in the limit $\omega\rightarrow \infty$ and $r=O(1/\sqrt{\omega})$, this complex behavior does not appear in the reduced order averaged system.

\section{Conclusion}
In this manuscript, we analyzed a class of singularly perturbed high-amplitude, high-frequency oscillatory systems that naturally arises in extremum seeking applications and stabilization by oscillatory controls. We combined singular perturbation with the higher order averaging theorem in order to capture the stability properties of this class of systems. As an application, we proposed a novel 3D source seeking algorithm for rigid bodies with a non-collocated sensor inspired by the chemotaxis of sea urchin sperm cells.
\section*{Acknowledgement} The authors like to acknowledge the support of the NSF Grant CMMI-1846308. \\
\appendices
\renewcommand{\thelem}{\thesection.\arabic{lem}} 
\renewcommand{\thecor}{\thesection.\arabic{cor}}
\renewcommand{\thedefn}{\thesection.\arabic{defn}} 
\renewcommand{\theprop}{\thesection.\arabic{prop}} 
\section{Definitions}
\begin{defn}\thlabel{defn:A}
 The set $\mathcal{S}$ is said to be \textit{singularly semi-globally practically uniformly asymptotically stable} for system (\ref{eq:orig_sys}) if the following is satisfied:
\begin{enumerate}
    \item $\forall \epsilon_x,\epsilon_z\in(0,\infty)$ there exists $\delta_x,\delta_z\in(0,\infty)$ and $\omega^*\in(0,\infty)$ such that $\forall \omega\in(\omega^*,\infty)$, $\forall t_0\in\mathbb{R}$, and $\forall t\in[0,\infty)$ we have: 
\begin{align*}
\begin{drcases}
     \hfill \textbf{x}_0\in \mathcal{U}_{\delta_x}^{\mathcal{S}}  \hfill \\
     \textbf{y}_0-\bm{\varphi}_0(\textbf{x}_0)\in \mathcal{U}_{\delta_z}^{0}
\end{drcases} &\implies \begin{cases} 
     \hfill\textbf{x}(t)\in \mathcal{U}_{\epsilon_x}^{\mathcal{S}}\hfill \\
     \textbf{y}(t)-\bm{\varphi}_0(\textbf{x}(t))\in \mathcal{U}_{\epsilon_z}^{0} 
\end{cases}
\end{align*}
\item $\forall \epsilon_x,\epsilon_z\in(0,\infty)$ and all $\delta_x,\delta_z\in(0,\infty)$, there exists a time $T_f\in(0,\infty)$ and $\omega^*\in(0,\infty)$ such that $\forall \omega\in(\omega^*,\infty)$, $\forall t_0\in\mathbb{R}$, $\forall t_1\in[T_f,\infty)$, and $\forall t_2\in[T_f/\omega,\infty)$ we have: 
\begin{align*}
\begin{drcases}
     \hfill \textbf{x}_0\in \mathcal{U}_{\delta_x}^{\mathcal{S}}  \hfill\\
     \textbf{y}_0-\bm{\varphi}_0(\textbf{x}_0)\in \mathcal{U}_{\delta_z}^{0}
\end{drcases} &\implies \begin{cases} 
     \hfill \textbf{x}(t_1)\in \mathcal{U}_{\epsilon_x}^{\mathcal{S}} \hfill \\
     \textbf{y}(t_2)-\bm{\varphi}_0(\textbf{x}(t_2))\in \mathcal{U}_{\epsilon_z}^{0}
\end{cases}
\end{align*}
\item $\forall \delta_x,\delta_z\in(0,\infty)$ there exists $\epsilon_x,\epsilon_z\in(0,\infty)$ and $\omega^*\in(0,\infty)$ such that $\forall \omega\in(\omega^*,\infty)$, $\forall t_0\in\mathbb{R}$, and $\forall t\in[0,\infty)$ we have: 
\begin{align*}
\begin{drcases}
     \hfill \textbf{x}_0\in \mathcal{U}_{\delta_x}^{\mathcal{S}}  \hfill\\
     \textbf{y}_0-\bm{\varphi}_0(\textbf{x}_0)\in \mathcal{U}_{\delta_z}^{0}
\end{drcases} &\implies \begin{cases} 
     \hfill \textbf{x}(t)\in \mathcal{U}_{\epsilon_x}^{\mathcal{S}}\hfill \\
     \textbf{y}(t)-\bm{\varphi}_0(\textbf{x}(t))\in \mathcal{U}_{\epsilon_z}^{0}
\end{cases}\\
\end{align*}
\end{enumerate}
\end{defn}
Observe that our definition of singular semi-global practical uniform asymptotic stability is different from the definitions introduced in \cite{durr2015singularly} due to the absence of a second parameter. Intuitively, when two or more parameters are involved, the so called `distinguished limit' is a standard technique in perturbation theory that simplifies the interaction between the limiting behavior of the two parameters on the trajectories of differential equations \cite[Chapter 2]{kevorkian2012multiple}. Our definitions are motivated by this concept of distinguished limits. 
\section{Trajectory Approximation}
For the purpose of brevity, we state here some notations that may enhance the readability of the proof. Whenever a Lipschitz property of a map $\textbf{f}$ over a subset $\mathcal{K}$ is employed, the corresponding Lipschitz constant is labelled as $L_{\textbf{f},\mathcal{K}}$. Similarly, when a uniform bound is employed, it is labelled as $B_{\textbf{f},\mathcal{K}}$. Sometimes we use $M_{\textbf{f},\mathcal{K}}$ as a generic constant when a mix of the two properties is used. Finally, we may omit mentioning the map in the constant label when it is too long or when it is clear from the context. 

Under \thref{asmp:A}, we have a trajectory approximation result between the original system (\ref{eq:orig_sys}) and the reduced order averaged system (\ref{eq:reduced_avg_system}): 
\begin{prop}\thlabel{prop:A}
Let \thref{asmp:A} be satisfied, and suppose that a compact subset $\mathcal{S}\subset\mathbb{R}^n$ is globally uniformly asymptotically stable for the averaged reduced order system (\ref{eq:reduced_avg_system}). Then, there exist constants $\lambda >0 $ and $\gamma > 0$ such that for every bounded subset $ \mathcal{B}_{\textbf{x}}\times \mathcal{B}_{\textbf{z}}\subset\mathbb{R}^n\times\mathbb{R}^m$, $\forall t_f\in(0,\infty)$, and $\forall D\in(0,\infty)$, there exists $\omega^*\in(0,\infty)$ such that $\forall \omega\in(\omega^*,\infty)$, $\forall t_0\in\mathbb{R}$, $ \forall (\textbf{x}_0,\textbf{y}_0-\bm{\varphi}_0(\textbf{x}_0))\in \mathcal{B}_{\textbf{x}}\times \mathcal{B}_{\textbf{z}}$, and $\forall t\in[t_0,t_0+t_f]$, unique trajectories of the system (\ref{eq:orig_sys}) exist and satisfy:
\begin{gather}
    \lVert \textbf{x}(t)-\bar{\textbf{x}}(t) \lVert < D \\
     \lVert \textbf{y}(t)-\bm{\varphi}_0(\textbf{x}(t))\lVert < \gamma \lVert \textbf{y}_0-\bm{\varphi}_0(\textbf{x}_0)\lVert \,\text{e}^{-\omega(t-t_0)\lambda } + D
\end{gather}
\end{prop}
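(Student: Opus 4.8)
The plan is to peel off the fast, exponentially stable boundary layer with a near-identity change of coordinates, show that the slow variable $\textbf{x}$ is then driven --- up to an error that vanishes as $\omega\to\infty$ --- by the reduced order system (\ref{eq:reduced_system}), and finally invoke the higher order averaging result for (\ref{eq:reduced_system}) together with the global asymptotic stability of $\mathcal{S}$ to close all the estimates on a fixed compact region. First I would pass to the layer variable $\textbf{z}=\textbf{y}-\bm{\varphi}_0(\textbf{x})$; differentiating and using (\ref{eq:orig_sys}) gives
\[
\dot{\textbf{z}}=\omega\textbf{A}\textbf{z}+\sqrt{\omega}\,\textbf{b}_1(\textbf{x},\omega t)+\mathcal{O}(\sqrt{\omega}\,\lVert\textbf{z}\rVert)+\mathcal{O}(1),
\]
where $\textbf{b}_1$ is the map in (\ref{eq:red_avg_sys_dets_5}) and the $\mathcal{O}(1)$ terms (into which the $1/\sqrt{\omega}$-remainders $\textbf{f}_3,\textbf{g}_3$ are absorbed) are bounded uniformly in $\omega$ on compact sets. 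Substituting the near-identity transformation $\textbf{z}=\hat{\textbf{z}}+\tfrac{1}{\sqrt{\omega}}\bm{\varphi}_1(\textbf{x},\omega t)$ and using that $\bm{\varphi}_1$ is precisely the $T$-periodic solution of $\partial_\tau\bm{\varphi}_1=\textbf{A}\bm{\varphi}_1+\textbf{b}_1$ cancels the large $\sqrt{\omega}\,\textbf{b}_1$ forcing, leaving
\[
\dot{\hat{\textbf{z}}}=\omega\textbf{A}\hat{\textbf{z}}+\sqrt{\omega}\,\textbf{N}(\textbf{x},\hat{\textbf{z}},\omega t)\,\hat{\textbf{z}}+\textbf{h}(\textbf{x},\hat{\textbf{z}},\omega t,\omega),
\]
with $\textbf{N}$ and $\textbf{h}$ bounded on compact sets uniformly in $\omega$.

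Since $\textbf{A}$ is Hurwitz, fix $\gamma,\lambda>0$ once and for all with $\lVert\text{e}^{s\textbf{A}}\rVert\le\gamma\,\text{e}^{-2\lambda s}$; for $\omega$ large the bounded perturbation $\sqrt{\omega}\,\textbf{N}$ is dominated by the decay rate $2\omega\lambda$ of $\omega\textbf{A}$, so by robustness of exponential stability the transition matrix of the linear part of the $\hat{\textbf{z}}$-equation is bounded by $\gamma\,\text{e}^{-\omega\lambda(t-s)}$, and variation of constants yields, as long as the trajectory stays in the compact set where $B_{\textbf{h}}$ holds,
\[
\lVert\hat{\textbf{z}}(t)\rVert\le\gamma\,\text{e}^{-\omega\lambda(t-t_0)}\lVert\hat{\textbf{z}}(t_0)\rVert+\frac{\gamma\,B_{\textbf{h}}}{\omega\lambda}.
\]
Translating back to $\textbf{z}$ --- the $\bm{\varphi}_1$-correction is $\mathcal{O}(1/\sqrt{\omega})$ --- and choosing $\omega$ large enough that the $\mathcal{O}(1/\sqrt{\omega})$ and $\mathcal{O}(1/\omega)$ contributions fall below $D$ gives the second inequality of the proposition. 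The crucial consequence for the slow variable is that $\int_{t_0}^{t_0+t_f}\sqrt{\omega}\,\text{e}^{-\omega\lambda(s-t_0)}\,ds\le 1/(\sqrt{\omega}\lambda)$, so the boundary layer feeds back into $\textbf{x}$ only through an $\mathcal{O}(1/\sqrt{\omega})$ amount over the interval.

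Substituting $\textbf{y}=\bm{\varphi}_0(\textbf{x})+\hat{\textbf{z}}+\tfrac{1}{\sqrt{\omega}}\bm{\varphi}_1(\textbf{x},\omega t)$ into the $\textbf{x}$-equation of (\ref{eq:orig_sys}) and Taylor-expanding $\textbf{f}_1$ in its middle argument about $\bm{\varphi}_0(\textbf{x})$, the leading terms reproduce $\sqrt{\omega}\,\tilde{\textbf{f}}_1(\textbf{x},\omega t)+\tilde{\textbf{f}}_2(\textbf{x},\omega t)$ --- with the $\textbf{C}\bm{\varphi}_1$ summand of $\tilde{\textbf{f}}_2$ arising from the cross term $\sqrt{\omega}\cdot\tfrac{1}{\sqrt{\omega}}\textbf{C}\bm{\varphi}_1$ --- while the rest is $\sqrt{\omega}\,\textbf{C}(\textbf{x},\bm{\varphi}_0(\textbf{x}),\omega t)\hat{\textbf{z}}$ plus terms of order $\mathcal{O}(\lVert\hat{\textbf{z}}\rVert)+\mathcal{O}(1/\sqrt{\omega})$, which by the boundary-layer bound integrate to $\mathcal{O}(1/\sqrt{\omega})$ over $[t_0,t_0+t_f]$; thus $\textbf{x}$ solves the reduced order system (\ref{eq:reduced_system}) up to a vanishing perturbation. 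Because the $\mathcal{O}(\sqrt{\omega})$ amplitude rules out a direct Gr\"onwall comparison, I would first apply the higher order averaging near-identity transformation $\textbf{x}=\textbf{w}+\tfrac{1}{\sqrt{\omega}}\textbf{u}_1(\textbf{w},\omega t)$ with $\textbf{u}_1(\textbf{w},\tau)=\int_0^\tau\tilde{\textbf{f}}_1(\textbf{w},s)\,ds$, which is $T$-periodic because $\int_0^T\tilde{\textbf{f}}_1=0$ (\thref{asmp:A}); this makes the $\textbf{w}$-dynamics $\mathcal{O}(1)$ with average exactly $\bar{\textbf{f}}$, as carried out in \cite{abdelgalil2022recursive} (see also \cite{sanders2007averaging,murdock1983some}), so ordinary averaging together with a Gr\"onwall estimate gives $\lVert\textbf{w}(t)-\bar{\textbf{x}}(t)\rVert\to 0$ as $\omega\to\infty$ and hence $\lVert\textbf{x}(t)-\bar{\textbf{x}}(t)\rVert<D$ for $\omega$ large, since $\lVert\textbf{x}-\textbf{w}\rVert=\mathcal{O}(1/\sqrt{\omega})$. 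The implicit compactness of the trajectory, and its existence and uniqueness on $[t_0,t_0+t_f]$, then follow by a standard continuation/bootstrap argument: global uniform asymptotic stability of $\mathcal{S}$ for (\ref{eq:reduced_avg_system}) confines every $\bar{\textbf{x}}(\cdot)$ issuing from the bounded set $\mathcal{B}_{\textbf{x}}$ to a fixed compact set, on a $D$-inflation of which (together with a compact neighborhood of $\textbf{z}=0$) all the above constants are uniform, and the error bounds prevent the trajectory of (\ref{eq:orig_sys}) from reaching the boundary; periodicity of the vector fields in $\omega t$ makes the estimates uniform in $t_0\in\mathbb{R}$, while $\gamma,\lambda$ depend only on $\textbf{A}$ and $\omega^*$ absorbs the dependence on $\mathcal{B}_{\textbf{x}}\times\mathcal{B}_{\textbf{z}}$, $t_f$ and $D$.

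The step I expect to be the main obstacle is controlling the feedback $\sqrt{\omega}\,\textbf{C}(\textbf{x},\bm{\varphi}_0(\textbf{x}),\omega t)\hat{\textbf{z}}$ of the boundary layer into the slow dynamics: the fast variable re-enters the slow equation with an \emph{amplifying} factor $\sqrt{\omega}$, so one must exploit simultaneously its fast decay rate $\omega\lambda$ and the $\mathcal{O}(1/\omega)$ size of its quasi-steady value to see that the cumulative effect over a fixed-length interval is only $\mathcal{O}(1/\sqrt{\omega})$, and this has to be tracked in parallel with the second order averaging near-identity transformation --- whose generator is itself $\mathcal{O}(1/\sqrt{\omega})$ --- so that scrupulous bookkeeping of the order in $1/\sqrt{\omega}$ of every term, together with verifying that all constants and the rate $\lambda$ can be chosen uniformly over the bounded set (which is why $\textbf{A}$ being \emph{Hurwitz}, with margin, rather than marginally stable, is essential), is the real substance of the proof.
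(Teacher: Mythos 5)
Your proposal is correct and rests on the same central device as the paper --- augmenting the singular-perturbation shift $\textbf{y}\mapsto\textbf{y}-\bm{\varphi}_0(\textbf{x})$ with a near-identity corrector obtained from the $T$-periodic solution of $\partial_\tau\bm{\varphi}=\textbf{A}\bm{\varphi}+\textbf{b}$ --- but it closes the estimates by a genuinely different route in two places. First, the paper uses \emph{two} correctors $\bm{\varphi}_1,\bm{\varphi}_2$ so that the residual in the layer equation is $O(\varepsilon^3)$ in the fast time $\tau=\omega(t-t_0)$; its Lyapunov/ultimate-boundedness argument then gives the floor $\lVert\textbf{z}\rVert\lesssim\varepsilon^{3/2}$, which is exactly what makes $\varepsilon\int_0^{t_f/\varepsilon^2}\lVert\textbf{z}\rVert\,d\tau$ vanish. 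Your single corrector suffices only because you replace that by the sharper variation-of-constants bound $\lVert\hat{\textbf{z}}\rVert\lesssim\text{e}^{-\omega\lambda(t-t_0)}\lVert\hat{\textbf{z}}_0\rVert+1/\omega$; had you used the paper's lossier $\dot V\leq-c\lVert\textbf{z}\rVert^2+B\varepsilon^{k}$ estimate, one corrector would yield only an $O(\varepsilon)$ floor and the feedback $\sqrt{\omega}\,\textbf{C}\hat{\textbf{z}}$ would \emph{not} integrate to $o(1)$ over $[0,t_f/\varepsilon^2]$ --- so the choice of layer estimate and the number of correctors must be made consistently, and you have done so. Second, for the slow variable the paper does not transform $\textbf{x}$ at all: it compares $\textbf{x}$ with the reduced-order trajectory $\tilde{\textbf{x}}$ of (\ref{eq:reduced_system}) by a contradiction argument on the exit time from a moving tube, splitting the interval into periods, applying Hadamard's lemma and integrating by parts against the zero-mean matrix $\textbf{F}_1$ (a summation-by-parts surrogate for your transformation $\textbf{x}=\textbf{w}+\varepsilon\textbf{u}_1$), and only afterwards invokes the companion averaging theorem for $\tilde{\textbf{x}}$ versus $\bar{\textbf{x}}$. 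Your route --- transform first, then ordinary averaging plus Gr\"onwall against $\bar{\textbf{x}}$ directly --- collapses the two-stage comparison into one and is conceptually cleaner, at the price of carrying the $\hat{\textbf{z}}$-dependent and $O(1/\sqrt{\omega})$ remainders through $(\text{Id}+\varepsilon\partial_{\textbf{w}}\textbf{u}_1)^{-1}$ and of reproving the averaging step that the paper simply cites from \cite{abdelgalil2022recursive}; both arguments deliver the stated conclusion with constants uniform over $\mathcal{B}_{\textbf{x}}\times\mathcal{B}_{\textbf{z}}$ and $t_0$.
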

\begin{proof}
We apply the time scaling $\tau=\omega (t-t_0)$, and we let $\varepsilon=1/\sqrt{\omega}$. In contrast to the standard singular perturbation analysis which starts with a coordinate shift for the singularly perturbed part of the system from $\textbf{y}$ to $\textbf{y}-\bm{\varphi}_0(\textbf{x})$ (e.g. \cite[Chapter 11]{khalil2002nonlinear}, \cite[Section I]{durr2015singularly}), we augment the standard coordinate shift with a \textit{near-identity} part:
\begin{equation}\label{eq:nearid_shift}
\textbf{z}=\textbf{y}-\bm{\varphi}_0(\textbf{x})-\varepsilon\, \bm{\varphi}_1(\textbf{x},\tau)-\varepsilon^2\bm{\varphi}_2(\textbf{x},\tau),
\end{equation}
where the maps $\bm{\varphi}_i(\textbf{x},\tau)$ for $i\in\{1,2\}$ are yet to be determined. This is coordinate shift is inspired by the standard near identity transform common in the higher order averaging literature \cite[Section 2.8]{sanders2007averaging}. Observe that under this coordinate and time scale change, we have:
\begin{equation}
\begin{aligned}\label{eq:shifted_orig_sys}
    \frac{d\textbf{x}}{d\tau}&=\sum_{i=1}^2 \varepsilon^i \textbf{v}_i(\textbf{x},\textbf{z},\tau) +\varepsilon^3 \textbf{v}_3(\textbf{x},\textbf{z},\tau,\varepsilon)\\
    \frac{d\textbf{z}}{d\tau}&= \textbf{A}\,\textbf{z} + \sum_{i=1}^2 \varepsilon^i\textbf{h}_i(\textbf{x},\textbf{z},\tau) + \varepsilon^3 \textbf{h}_3(\textbf{x},\textbf{z},\tau,\varepsilon)
\end{aligned}
\end{equation}
where the vector fields $\textbf{v}_i$ and $\textbf{h}_i$ for $i\in\{1,2\}$ are given by:
\begin{align}
    &\begin{aligned}
    \textbf{v}_1(\textbf{x},\textbf{z},\tau)&= \textbf{f}_1(\textbf{x},\textbf{z}+\bm{\varphi}_0(\textbf{x}),\tau)
    \end{aligned}\\
    &\begin{aligned}
    \textbf{v}_2(\textbf{x},\textbf{z},\tau)&= \textbf{f}_2(\textbf{x},\textbf{z}+\bm{\varphi}_0(\textbf{x}),\tau)\\
     &+\textbf{C}(\textbf{x},\textbf{z}+\bm{\varphi}_0(\textbf{x}),\tau) \bm{\varphi}_1(\textbf{x},\tau)
    \end{aligned}\\
    &\begin{aligned}
    \textbf{h}_1(\textbf{x},\textbf{z},\tau)&= \textbf{A}\,\bm{\varphi}_1(\textbf{x},\tau)-\partial_\tau\bm{\varphi}_1(\textbf{x},\tau) \\
    &- \partial_{\textbf{x}} \bm{\varphi}_0(\textbf{x}) \textbf{f}_1(\textbf{x},\textbf{z}+\bm{\varphi}_0(\textbf{x}),\tau)\\
    &+\textbf{g}_1(\textbf{x},\textbf{z}+\bm{\varphi}_0(\textbf{x}),\tau)
    \end{aligned}\\
    &\begin{aligned}
    \textbf{h}_2(\textbf{x},\textbf{z},\tau)&=\textbf{A}\, \bm{\varphi}_2(\textbf{x},\tau)-\partial_\tau\bm{\varphi}_2(\textbf{x},\tau)\\
    &-\partial_{\textbf{x}} \bm{\varphi}_1(\textbf{x},\tau) \textbf{f}_1(\textbf{x},\textbf{z}+\bm{\varphi}_0(\textbf{x}),\tau) \\
    &- \partial_{\textbf{x}} \bm{\varphi}_0(\textbf{x}) \textbf{f}_2(\textbf{x},\textbf{z}+\bm{\varphi}_0(\textbf{x}),\tau)\\
    &+\partial_{\textbf{w}} \textbf{g}_1(\textbf{x},\textbf{w},\tau)|_{\textbf{w}=\bm{\varphi}_0(\textbf{x})}\bm{\varphi}_1(\textbf{x},\tau) \\
    &+\textbf{g}_2(\textbf{x},\textbf{z}+\bm{\varphi}_0(\textbf{x}),\tau)
    \end{aligned}
\end{align}
Now, we let $\bm{\varphi}_1(\textbf{x},\tau)$ and $\bm{\varphi}_2(\textbf{x},\tau)$ be the solutions of the linear non-homogeneous two point boundary value problems:
\begin{align}\label{eq:BVPi_1}
    \partial_\tau\bm{\varphi}_i(\textbf{x},\tau)&= \textbf{A}\, \bm{\varphi}_i(\textbf{x},\tau) + \textbf{b}_i(\textbf{x},\tau)\\ \label{eq:BVPi_2}
    \bm{\varphi}_i(\textbf{x},\tau) &= \bm{\varphi}_i(\textbf{x},\tau+T)
\end{align}
for $i\in\{1,2\}$, where:
\begin{align}
     &\textbf{b}_1(\textbf{x},\tau)= \textbf{g}_1(\textbf{x},\bm{\varphi}_0(\textbf{x}),\tau)-\partial_{\textbf{x}} \bm{\varphi}_0(\textbf{x}) \textbf{f}_1(\textbf{x},\bm{\varphi}_0(\textbf{x}),\tau)\\
     &\begin{aligned}
     \textbf{b}_2(\textbf{x},\tau)&= \textbf{g}_2(\textbf{x},\bm{\varphi}_0(\textbf{x}),\tau)-\partial_{\textbf{x}} \bm{\varphi}_0(\textbf{x}) \textbf{f}_2(\textbf{x},\bm{\varphi}_0(\textbf{x}),\tau)\\
     &-\partial_{\textbf{x}} \bm{\varphi}_1(\textbf{x},\tau) \textbf{f}_1(\textbf{x},\bm{\varphi}_0(\textbf{x}),\tau) \\
     &+\partial_{\textbf{w}} \textbf{g}_1(\textbf{x},\textbf{w},\tau)|_{\textbf{w}=\bm{\varphi}_0(\textbf{x})}\bm{\varphi}_1(\textbf{x},\tau)
     \end{aligned}
\end{align}
The following lemma is a simple consequence of \thref{asmp:A} and standard linear systems theory:
\begin{lem}\thlabel{lem:B}
Let \thref{asmp:A} be satisfied. Then, the non-homogeneous BVPs (\ref{eq:BVPi_1})-(\ref{eq:BVPi_2}) have unique solutions $\bm{\varphi}_i\in\mathcal{C}^{3-i}(\mathbb{R}^{n};\mathbb{R}^m)$ defined by:
\begin{align}
    \bm{\varphi}_i(\textbf{x},\tau)&= \left(\text{Id}-\text{e}^{T \textbf{A}}\right)^{-1}\int_{0}^T \text{e}^{ (T-s)\textbf{A}}\,\textbf{b}_i(\textbf{x},s+\tau)ds
\end{align}
where $\text{Id}$ is the identity map on $\mathbb{R}^m$. 
\end{lem}
\begin{proof}
    The result can be verified by direct substitution, and the regularity of the solutions follows from \thref{asmp:A}.
\end{proof}

With this choice of the maps $\bm{\varphi}_i(\textbf{x},\tau)$ for $i\in\{1,2\}$, observe that $\textbf{v}_1(\textbf{x},0,\tau)=\tilde{\textbf{f}}_1(\textbf{x},\tau),\,\textbf{v}_2(\textbf{x},0,\tau)=\tilde{\textbf{f}}_2(\textbf{x},\tau)$, and that $\textbf{h}_1(\textbf{x},0,\tau)=\textbf{h}_2(\textbf{x},0,\tau)=0$, $\forall \textbf{x}\in\mathbb{R}^n,\,\forall \tau\in\mathbb{R}$. That is, the origin $\textbf{z}=0$ is an equilibrium point for the boundary layer model:
\begin{align}
    \label{eq:bl_model_z}\frac{d\textbf{z}}{d\tau} &= \textbf{A}\,\textbf{z} + \sum_{i=1}^2 \varepsilon^i\textbf{h}_i(\textbf{x},\textbf{z},\tau), & \textbf{z}(0)&= \textbf{z}_0
\end{align}
Moreover, it can be shown that the vector fields $\textbf{h}_i$ for $i\in\{1,2\}$ are Lipschitz continuous and bounded on every compact subset $\mathcal{K}\subset\mathbb{R}^n\times\mathbb{R}^m$, uniformly in $\tau$, for some Lipschitz constants $L_{\textbf{h}_i,\mathcal{K}}>0$ and bounds $B_{\textbf{h}_i,\mathcal{K}}>0$, and that the remainder terms $\textbf{h}_3$ and $\textbf{v}_3$ are continuous and bounded on any compact subset $\mathcal{K}\subset\mathbb{R}^n\times\mathbb{R}^m$ uniformly in $\tau\in\mathbb{R}$ and $\varepsilon\in[0,\varepsilon_0]$ for some $\varepsilon_0>0$. Next, we have the following lemma:
\begin{lem}\thlabel{lem:A}
Let \thref{asmp:A} be satisfied, and suppose that a compact subset $\mathcal{S}\subset\mathbb{R}^n$ is globally uniformly asymptotically stable for the averaged reduced order system (\ref{eq:reduced_avg_system}). Then, there exist constants $\lambda >0 $ and $\gamma > 0$ such that for every bounded subset $ \mathcal{B}_{\textbf{x}}\times \mathcal{B}_{\textbf{z}}\subset\mathbb{R}^n\times\mathbb{R}^m$, $\forall t_f\in(0,\infty)$, and $\forall D\in(0,\infty)$, there exists $\varepsilon^*\in(0,\varepsilon_0)$ such that $\forall \varepsilon\in(0,\varepsilon^*)$,  $ \forall (\textbf{x}_0,\textbf{z}_0)\in \mathcal{B}_{\textbf{x}}\times \mathcal{B}_{\textbf{z}}$, and $\forall \tau\in[0,t_f/\varepsilon^{2}]$, unique trajectories of the system (\ref{eq:shifted_orig_sys}) exist and satisfy:
\begin{gather}
    \lVert \textbf{x}(\tau)-\tilde{\textbf{x}}(\tau) \lVert < D \\
     \lVert \textbf{z}(\tau)\lVert < \gamma \lVert \textbf{z}_0\lVert \,\text{e}^{-\lambda\,\tau} + D
\end{gather}
\end{lem}
\begin{proof}
The full proof of this Lemma is rather long. So we include it here for review purposes, but it will be replaced by a sketch in the final manuscript to conform with the page limit. The full proof will appear in a journal version of the current manuscript. The proof combines ideas from \cite[Lemma 1]{durr2015singularly} and \cite[Lemma 2.8.2]{sanders2007averaging}. 

Fix an arbitrary bounded subset $\mathcal{B}_{\textbf{x}}\times \mathcal{B}_{\textbf{z}}\subset\mathbb{R}^n\times\mathbb{R}^m$, an arbitrary $D\in(0,\infty)$, and an arbitrary $t_f\in (0,\infty)$. Due to \thref{asmp:A}, we know that  $\forall (\textbf{x}_0,\textbf{y}_0)\in \mathcal{B}_{\textbf{x}}\times \mathcal{B}_{\textbf{z}}$, and $\forall \varepsilon\in(0,\varepsilon_0]$, unique trajectories of the system (\ref{eq:shifted_orig_sys}) exist.  Let $[0,\tau_e)$ with $\tau_e\in(0,\infty)$ be the maximal interval of existence and uniqueness of a given solution $(\textbf{x}(\tau),\textbf{z}(\tau))$, where the dependence of the solution on the initial condition is suppressed for brevity. By assumption, we know that the compact subset $\mathcal{S}$ is globally uniformly asymptotically stable for the reduced order averaged system (\ref{eq:reduced_avg_system}). Hence, according to \cite{abdelgalil2022recursive,durr2013lie}, we know that the compact subset $\mathcal{S}$ is semi-globally practically uniformly asymptotically stable for the reduced order system (\ref{eq:reduced_system}). That is, we know that $\exists \varepsilon_1\in (0,\varepsilon_0)$ and a compact subset $\mathcal{N}_{\textbf{x}}\subset \mathbb{R}^n$ such that $\forall \textbf{x}_0\in \mathcal{B}_{\textbf{x}}$, and $\forall \varepsilon\in(0,\varepsilon_1)$, solutions $\tilde{\textbf{x}}(\tau)$ to system (\ref{eq:reduced_system}) exist on the interval $[0,\infty)$, and $\tilde{\textbf{x}}(\tau)\in \mathcal{N}_{\textbf{x}},\,\forall \tau\in [0,\infty)$. Define an open tubular neighborhood $\mathcal{O}_\textbf{x}(\tau)$ around $\tilde{\textbf{x}}(\tau)$ by $\mathcal{O}_\textbf{x}(\tau)=\{\textbf{x}\in\mathbb{R}^n\,:\,\lVert \textbf{x} - \tilde{\textbf{x}}(\tau)\lVert < D\}$, and observe that the $\textbf{x}$-component of the solution to (\ref{eq:shifted_orig_sys}) is initially inside $\mathcal{O}_\textbf{x}(0)$, i.e. $\textbf{x}(0)=\textbf{x}_0\in \mathcal{O}_\textbf{x}(0)$. Moreover, define the compact subset $\mathcal{M}_\textbf{x}=\overline{\{x\in\mathbb{R}^n:\,\inf_{\textbf{x}'\in \mathcal{N}_\textbf{x}}\lVert \textbf{x}-\textbf{x}'\lVert <D\}}$, where the overline indicates the closure of a set. The continuity of the solutions $(\textbf{x}(\tau),\textbf{z}(\tau))$ implies that one of the following cases holds: C1) $\exists \tau_D\in (0,\tau_e)$ s.t. $\textbf{x}(\tau)\in \mathcal{O}_\textbf{x}(\tau),\,\forall \tau\in[0,\tau_D)$, and $\lVert \textbf{x}(\tau_D)-\tilde{\textbf{x}}(\tau_D)\lVert = D$, or C2) $\textbf{x}(\tau)\in \mathcal{O}_\textbf{x}(\tau),\,\forall \tau\in [0,\tau_e)$. Suppose that case C1) holds and observe that $\tilde{\textbf{x}}(\tau)\in \mathcal{M}_\textbf{x},\,\forall \tau\in[0,\tau_D]$. From \thref{asmp:A}, we know that the matrix $\textbf{A}$ is Hurwitz, which implies \cite{khalil2002nonlinear} that there exists a function $ V\in\mathcal{C}^1(\mathbb{R}^m;\mathbb{R})$, and positive constants $\alpha_i,\,i\in\{1,2,3,4\}$ such that:
        \begin{gather}
            \label{eq:Lyapunov_1}\alpha_1\lVert \textbf{z} \lVert^2 \leq V(\textbf{z}) \leq \alpha_2 \lVert \textbf{z} \lVert ^2 \\
            \label{eq:Lyapunov_2}\partial_{\textbf{z}}V(\textbf{z}) \textbf{A}\,\textbf{z} \leq -\alpha_3 \lVert \textbf{z} \lVert ^2, \quad \left\lVert \partial_{\textbf{z}}V(\textbf{z})  \right\lVert \leq \alpha_4 \lVert \textbf{z} \lVert
        \end{gather}
    
    Let $c\in (0,\infty)$ be such that the compact subset $\mathcal{N}_\textbf{z}=\{\textbf{z}\in\mathbb{R}^m :\, \lVert \textbf{z} \lVert \leq \sqrt{c/\alpha_2} \}$ contains the bounded set $\mathcal{B}_{\textbf{z}}$, and define the compact subset $\mathcal{M}_\textbf{z}=\{z\in\mathbb{R}^m :\, \lVert z \lVert \leq \sqrt{c/\alpha_1} \}$. Furthermore, define the compact subset $\mathcal{K}=\mathcal{M}_\textbf{x}\times \mathcal{M}_\textbf{z}$, and compute the derivative of $V$ along the trajectories of (\ref{eq:shifted_orig_sys}):
    \begin{align}
        \frac{dV}{d\tau} &= \partial_{\textbf{z}}V(\textbf{z}) \textbf{A}\,\textbf{z} +\sum_{i=1}^2 \varepsilon^i\partial_{\textbf{z}}V(\textbf{z})  \textbf{h}_i(\textbf{x},\textbf{z},\tau)+O(\varepsilon^3)\\
        &\leq -\left(\alpha_3- \alpha_4\sum_{i=1}^2 \varepsilon^i L_{\textbf{h}_i,\mathcal{K}}\right) \lVert \textbf{z} \lVert ^2 + B_{\textbf{h}_3,\mathcal{K}}\,\varepsilon^3
    \end{align}
    where we used the fact that the $O(\varepsilon^3)$ terms are uniformly bounded on the compact subset $\mathcal{K}$, and the vector fields $\textbf{h}_1$ and $\textbf{h}_2$ have an equilibrium point at $\textbf{z}=0$ and are Lipschitz continuous. Let $\varepsilon_2=\min\{\varepsilon_1,1,\alpha_3/(2\alpha_4(L_{h_1,\mathcal{K}}+L_{h^2,\mathcal{K}}))\}$, and observe that $\forall \varepsilon\in(0,\varepsilon_2)$, $\forall (\textbf{x},\textbf{z})\in \mathcal{K}$, $\forall \tau\in \mathbb{R}$, we have:
    \begin{align}
        \frac{d V(\textbf{z})}{d\tau} \leq -\frac{\alpha_3}{2}\lVert \textbf{z} \lVert ^2 + B_{\textbf{h}_3,\mathcal{K}}\, \varepsilon^3
    \end{align}
    Let $\varepsilon_3=\min\{\varepsilon_2,\sqrt{\alpha_3 c /(4\alpha_2 B_{\textbf{h}_3,\mathcal{K}})}\}$, then observe that $\forall\varepsilon\in(0,\varepsilon_3),\,\forall \tau\in\mathbb{R}$, and $\forall (\textbf{x},\textbf{z})\in \mathcal{M}_\textbf{x}\times \mathcal{M}_\textbf{z}\backslash \mathcal{N}_\textbf{z}$, we have that $\dot{V}\leq 0$, and so the solution $(\textbf{x}(\tau),\textbf{z}(\tau))$ stays inside $\mathcal{M}_\textbf{x}\times \mathcal{M}_\textbf{z}$, $\forall \tau\in[0,\tau_D]$. In addition, similar to \cite[Theorem 4.18]{khalil2002nonlinear}, we have that there exists constants $\gamma,\lambda,\alpha >0$ such that the estimate $\lVert \textbf{z}(\tau)\lVert < \gamma\,\lVert \textbf{z}_0\lVert \text{e}^{-\lambda \tau} + \alpha\,\varepsilon^{\frac{3}{2}}$, holds on the time interval $\tau\in[0,\tau_D]$, $\forall (\textbf{x}_0,\textbf{z}_0)\in\mathcal{B}_\textbf{x}\times \mathcal{B}_\textbf{z}$, $\forall \varepsilon\in(0,\varepsilon_3)$. We emphasize that the constants $\gamma, \lambda, \alpha$ depend on the constants $\alpha_j,L_{\textbf{h}_i,\mathcal{K}},B_{\textbf{h}_i,\mathcal{K}}$, but do not depend on the choice of $\varepsilon\in(0,\varepsilon_3)$. Similar arguments can be used to establish that in C2), we have that $(\textbf{x}(\tau),\textbf{z}(\tau))\in \mathcal{M}_\textbf{x}\times \mathcal{M}_\textbf{z},\,\forall \tau\in [0,\tau_e)$, which implies that $[0,\infty)\subset [0,\tau_e)$. Now observe that we may choose $\varepsilon<(D/\alpha)^{\frac23}$ in to ensure that the result of the lemma holds in case C2) (see also the proof of Lemma 1 in \cite{durr2015singularly}). 
    
    Next, we define an $\varepsilon$-dependent time $\tau_\varepsilon$ by requiring that the following inequality is satisfied:
    \begin{align}
        \label{eq:t_epsilon} \gamma\,\lVert \textbf{z}_0\lVert \text{e}^{-\lambda\,\tau} &< \alpha\, \varepsilon^{\frac32}, & \forall \textbf{z}_0&\in \mathcal{M}_\textbf{z},\,\forall \tau>\tau_\varepsilon
    \end{align}
    and observe that this is always possible for $\varepsilon >0$. In fact, it can be shown that $\tau_\varepsilon=$ $\max\{(3/(2\lambda)) $ $ \log((\gamma \sqrt{c/\alpha_2})/(\alpha\,\varepsilon)),$ $0\}$ satisfies the inequality (\ref{eq:t_epsilon}).  Now, we show that $\exists \varepsilon_4\in(0,\varepsilon_3)$ such that $\tau_\varepsilon<\tau_D,\, \forall \varepsilon\in (0,\varepsilon_4)$. To obtain a contradiction, suppose that there exists a bounded subset $\mathcal{B}_{\textbf{x}}\times \mathcal{B}_{\textbf{z}}\subset\mathbb{R}^n\times \mathbb{R}^m$, and a $D\in(0,\infty)$, such that $\forall \varepsilon_4\in(0,\varepsilon_3)$, $\exists \varepsilon\in(0,\varepsilon_4)$ such that $\tau_\varepsilon \geq \tau_D$. We estimate the difference $\lVert \textbf{x}(\tau_D)-\tilde{\textbf{x}}(\tau_D)\lVert = \lVert \int_{0}^{\tau_D}\big(\sum_{i=1}^2\varepsilon^i (\textbf{v}_i(\textbf{x}(\tau),\textbf{z}(\tau),\tau))-\tilde{\textbf{f}}_i(\tilde{\textbf{x}}(\tau),\tau) + O(\varepsilon^3)\big) d\tau\lVert \leq \varepsilon \int_{0}^{\tau_D} B_{\textbf{v}+\textbf{f},\mathcal{K}} d\tau \leq B_{\textbf{v}+\textbf{f},\mathcal{K}} \tau_D\varepsilon \leq B_{\textbf{v}+\textbf{f},\mathcal{K}} \tau_\varepsilon\,\varepsilon$, where $B_{\textbf{v}+\textbf{f},\mathcal{K}}$ is a uniform upper bound on the norm of the integrand inside the compact subset $\mathcal{K}$ whose existence is guaranteed by \thref{asmp:A}. Now, observe that $\lim_{\varepsilon\rightarrow 0} \tau_\varepsilon\,\varepsilon = 0$, and so $\forall D\in(0,\infty)$, $\exists \varepsilon_4\in(0,\varepsilon_3)$ such that $B_{\textbf{v}+\textbf{f},\mathcal{K}} \tau_\varepsilon\,\varepsilon \leq D/2,\,\forall \varepsilon\in(0,\varepsilon_4)$. Hence, we have that $\forall \varepsilon\in(0,\varepsilon_4)$, $\lVert x(\tau_D)-\tilde{\textbf{x}}(\tau_D)\lVert \leq D/2$ which contradicts the definition of $\tau_D$. Accordingly, we have that for all bounded subsets $\mathcal{B}_{\textbf{x}}\times \mathcal{B}_{\textbf{z}}\subset\mathbb{R}^n\times \mathbb{R}^m$, $\forall D\in(0,\infty)$, $\exists \varepsilon_4\in(0,\varepsilon_3)$, such that $\forall \varepsilon\in(0,\varepsilon_4)$, $\forall (\textbf{x}_0,\textbf{z}_0)\in \mathcal{B}_{\textbf{x}}\times \mathcal{B}_{\textbf{z}}$, we have that $\tau_\varepsilon < \tau_D$.
    
    Next, we show that $\exists\varepsilon_5\in(0,\varepsilon_4)$ such that $t_f/\varepsilon^2< \tau_D$ $\forall \varepsilon\in(0,\varepsilon_5)$. To obtain a contradiction, suppose that there exists a bounded subset $\mathcal{B}_{\textbf{x}}\times \mathcal{B}_{\textbf{z}}\subset\mathbb{R}^n\times \mathbb{R}^m$, a $t_f\in(0,\infty)$, and a $D\in(0,\infty)$, such that $\forall \varepsilon_5\in(0,\varepsilon_4)$, $\exists \varepsilon\in(0,\varepsilon_5)$ such that $t_f/\varepsilon^2 \geq \tau_D$. Once again, we estimate the difference $\lVert \textbf{x}(\tau)-\tilde{\textbf{x}}(\tau)\lVert$ on the interval $[0,\tau_D]$. First, for $\tau\leq\tau_\varepsilon$, observe that $\lVert \textbf{x}(\tau)-\tilde{\textbf{x}}(\tau)\lVert= \lVert \int_{0}^{\tau}\big(\sum_{i=1}^2\varepsilon^i (\textbf{v}_i(\textbf{x}(s),\textbf{z}(s),s)-\tilde{\textbf{f}}_i(\tilde{\textbf{x}}(s),s)) + O(\varepsilon^3)\big) ds\lVert\leq B_{\textbf{v}+\textbf{f},\mathcal{K}}\tau_{\varepsilon}\varepsilon$ for some constant $B_{\textbf{v}+\textbf{f},\mathcal{K}}$. Since $\lim_{\varepsilon\rightarrow 0}\tau_\varepsilon\varepsilon = 0$, we conclude that when $\tau<\tau\varepsilon$, the difference $\lVert \textbf{x}(\tau)-\tilde{\textbf{x}}(\tau)\lVert$ can be made arbitrarily small by choosing $\varepsilon$ small enough. Second, for $\tau>t_\varepsilon$, we have that:
    $\lVert \textbf{x}(\tau)-\tilde{\textbf{x}}(\tau)\lVert= \lVert \int_{0}^{\tau}\big(\sum_{i=1}^2\varepsilon^i (\textbf{v}_i(\textbf{x}(s),\textbf{z}(s),s)-\tilde{\textbf{f}}_i(\tilde{\textbf{x}}(s),s)) + O(\varepsilon^3)\big) ds\lVert \leq \lVert \int_{0}^{\tau_\varepsilon}\big(\sum_{i=1}^2\varepsilon^i (\textbf{v}_i(\textbf{x}(s),\textbf{z}(s),s)-\tilde{\textbf{f}}_i(\tilde{\textbf{x}}(s),s))\big) ds\lVert+\lVert \int_{\tau_\varepsilon}^{\tau}\big(\sum_{i=1}^2\varepsilon^i (\textbf{v}_i(\textbf{x}(s),\textbf{z}(s),s)-\tilde{\textbf{f}}_i(\tilde{\textbf{x}}(s),s))\big) ds\lVert$ $+ B_{\textbf{v}+\textbf{f},\mathcal{K}}\tau_D\varepsilon^3$, which leads to the estimate: 
    \begin{gather}\label{eq:I_estimate}
    \lVert \textbf{x}(\tau)-\tilde{\textbf{x}}(\tau)\lVert \leq B_{\textbf{v}+\textbf{f},\mathcal{K}}(\tau_\varepsilon+\tau_D\varepsilon^2)\varepsilon+ \lVert \textbf{I}_1\lVert\\
    \textbf{I}_1 = \int_{\tau_\varepsilon}^{\tau} \sum_{i=1}^2\varepsilon^i (\textbf{v}_i(\textbf{x}(s),\textbf{z}(s),s))-\tilde{\textbf{f}}_i(\tilde{\textbf{x}}(s),s)) ds
    \end{gather}
    on the interval $[0,\tau_D]$. We proceed to estimate $\lVert \textbf{I}_1 \lVert $ as follows:
    \begin{align}\label{eq:I1_estimate}
        \lVert \textbf{I}_1\lVert &\leq \varepsilon (\lVert \textbf{I}_2\lVert + \lVert \textbf{I}_3\lVert ) + \varepsilon^2 (\lVert \textbf{I}_4\lVert +\lVert \textbf{I}_5\lVert )
    \end{align}
    where $\textbf{I}_i$ for $i\in\{2,3,4,5\}$ are given by:
    \begin{align}
        \textbf{I}_2&= \int_{\tau_\varepsilon}^{\tau}(\textbf{v}_1(\textbf{x}(s),\textbf{z}(s),s))-\textbf{v}_1(\textbf{x}(s),0,s)) ds\\
        \textbf{I}_3&= \int_{\tau_\varepsilon}^{\tau} (\textbf{v}_1(\textbf{x}(s),0,s)-\tilde{\textbf{f}}_1(\tilde{\textbf{x}}(s),s)) ds\\
        \textbf{I}_4&= \int_{\tau_\varepsilon}^{\tau}(\textbf{v}_2(\textbf{x}(s),\textbf{z}(s),s))-\textbf{v}_2(\textbf{x}(s),0,s)) ds\\
        \textbf{I}_5&= \int_{\tau_\varepsilon}^{\tau} (\textbf{v}_2(\textbf{x}(s),0,s)-\tilde{\textbf{f}}_2(\tilde{\textbf{x}}(s),s)) ds
    \end{align}
    Observe that $\textbf{v}_i(\textbf{x},0,s)=\tilde{\textbf{f}}_i(\textbf{x},s)$, and so we have that:
    \begin{align}
        \textbf{I}_3&= \int_{\tau_\varepsilon}^{\tau} (\tilde{\textbf{f}}_1(\textbf{x}(s),s)-\tilde{\textbf{f}}_1(\tilde{\textbf{x}}(s),s)) ds \\
        \textbf{I}_5&= \int_{\tau_\varepsilon}^{\tau} (\tilde{\textbf{f}}_2(\textbf{x}(s),s)-\tilde{\textbf{f}}_2(\tilde{\textbf{x}}(s),s)) ds
    \end{align}
    We estimate each of the integrals above, starting by $\textbf{I}_2$, $\textbf{I}_4$ and $\textbf{I}_5$, which can be estimated as:
    \begin{align}\label{eq:I2_estimate}
        \lVert \textbf{I}_2\lVert &\leq \int_{\tau_\varepsilon}^{\tau} L_{\textbf{v}_1,\mathcal{K}} \lVert \textbf{z}(s) \lVert d\tau \\\label{eq:I4_estimate}
        \lVert \textbf{I}_4\lVert &\leq \int_{\tau_\varepsilon}^{\tau} L_{\textbf{v}_2,\mathcal{K}} \lVert \textbf{z}(s) \lVert ds \\\label{eq:I5_estimate}
        \lVert \textbf{I}_5\lVert &\leq \int_{\tau_\varepsilon}^{\tau} L_{\textbf{f}_2,\mathcal{K}} \lVert \textbf{x}(s)-\tilde{\textbf{x}}(s) \lVert ds
    \end{align}
    where $L_{\textbf{v}_1,\mathcal{K}},L_{\textbf{v}_2,\mathcal{K}},L_{\textbf{f}_2,\mathcal{K}}>0$ are Lipschitz constants. Next, we estimate $\lVert \textbf{I}_3\lVert $. We proceed by dividing the interval $\mathcal{I}=[\tau_\varepsilon,\tau]$ into sub-intervals of length $T$ and a left over piece:
    $$\mathcal{I}=\left(\bigcup_{i=1}^{k(\varepsilon)} [T_
    {i-1},T_i]\right) \bigcup \,[k(\varepsilon)T,\tau], $$ where $T_i=\tau_\varepsilon+i\,T$, and $k(\varepsilon)$ is the unique integer such that $k(\varepsilon) T \leq \tau < k(\varepsilon) T + T$. Then, we split $\textbf{I}_3$ into a sum of sub-integrals:
\begin{align}
      \textbf{I}_3
      &=\sum_{i=1}^{k(\varepsilon)} \textbf{I}_{3,i}+ \int_{k(\varepsilon)T}^{\tau}\left(\tilde{\textbf{f}}_1(\textbf{x}(s),s)-\tilde{\textbf{f}}_1(\tilde{\textbf{x}}(s),s) \right) ds \\
      \textbf{I}_{3,i}&= \int_{T_{i-1}}^{T_i}\left(\tilde{\textbf{f}}_1(\textbf{x}(s),s)-\tilde{\textbf{f}}_1(\tilde{\textbf{x}}(s),s) \right)ds
\end{align}
The part of the integral on the leftover piece can be bounded independently from $\varepsilon$ as follows:
\begin{align}
    \left\lVert\int_{k(\varepsilon)T}^{\tau}\left(\tilde{\textbf{f}}_1(\textbf{x}(s),s)-\tilde{\textbf{f}}_1(\tilde{\textbf{x}}(s),s) \right) ds\right\lVert \leq 2 B_{\textbf{f}_1,\mathcal{K}} T
\end{align}
Next, we employ Hadamard's lemma to obtain:
\begin{equation}
\begin{aligned} 
      \textbf{I}_{3,i}&=\int_{T_{i-1}}^{T_i}\textbf{F}_1(\textbf{x}(s),\tilde{\textbf{x}}(s),s) (\textbf{x}(s)-\tilde{\textbf{x}}(s)) ds
\end{aligned}
\end{equation}
where the matrix valued map $\textbf{F}_1$ is given by:
\begin{align}
&\textbf{F}_1(\textbf{x},\tilde{\textbf{x}},s)  =\int_{0}^{1}\partial_{\textbf{w}}\tilde{\textbf{f}}_1(\textbf{w},s)|_{\textbf{w}=\tilde{\textbf{x}}+\lambda(\textbf{x}-\tilde{\textbf{x}})} d\lambda
\end{align}
Through adding and subtracting a term, we may write:
\begin{equation}
\begin{aligned} 
     \textbf{I}_{3,i}=&\int_{T_{i-1}}^{T_i} \textbf{F}_1(\textbf{x}(T_{i-1}),\tilde{\textbf{x}}(T_{i-1}),s)\,(\textbf{x}(s)-\tilde{\textbf{x}}(s)) ds \\
     +&\int_{T_{i-1}}^{T_i}\Delta_i\big[\textbf{F}_1\big](s)\,(\textbf{x}(s)-\tilde{\textbf{x}}(s)) ds
\end{aligned}
\end{equation}
where the term $\Delta_i[\textbf{F}_1]$ is given by:
\begin{equation*}
\begin{aligned} 
    \Delta_i\big[\textbf{F}_1\big](s) &= \textbf{F}_1(\textbf{x}(s),\tilde{\textbf{x}}(s),s)-\textbf{F}_1(\textbf{x}(T_{i-1}),\tilde{\textbf{x}}(T_{i-1}),s)
\end{aligned}
\end{equation*}
Next, since the matrix-valued map $\textbf{F}_1$ is periodic with zero average over its third argument when the other arguments are fixed, we have that
\begin{align}
    \int_{T_{i-1}}^{T_i} \textbf{F}_1(\textbf{x}(T_{i-1}),\tilde{\textbf{x}}(T_{i-1}),s)\,\textbf{w}\,ds = 0 
\end{align}
for any fixed $\textbf{w}$. Thus, we may write:
\begin{equation}
\begin{aligned} 
    &\textbf{I}_{3,i}=\int_{T_{i-1}}^{T_i}\Delta_i\big[\textbf{F}_1\big]\,(\textbf{x}(s)-\tilde{\textbf{x}}(s)) ds \\
     &+\int_{T_{i-1}}^{T_i} \textbf{F}_1(\textbf{x}(T_{i-1}),\tilde{\textbf{x}}(T_{i-1}),s)\Delta_i[\textbf{x}-\tilde{\textbf{x}}] ds
\end{aligned}
\end{equation}
where $\Delta_i[\textbf{x}-\tilde{\textbf{x}}] = (\textbf{x}(s)-\textbf{x}(T_{i-1}))-(\tilde{\textbf{x}}(s)-\tilde{\textbf{x}}(T_{i-1}))$. The fundamental theorem of calculus yields: 
\begin{align}
    &(\textbf{x}(s)-\textbf{x}(T_{i-1}))-(\tilde{\textbf{x}}(s)-\tilde{\textbf{x}}(T_{i-1}))\\
    =&\varepsilon \int_{T_{i-1}}^{s}(\textbf{v}_1(\textbf{x}(\nu),\textbf{z}(\nu),\nu)-\tilde{\textbf{f}}_1(\tilde{\textbf{x}}(\nu),\nu))d\nu + O(\varepsilon^2) \\
    =&\varepsilon \int_{T_{i-1}}^{s}(\textbf{v}_1(\textbf{x}(\nu),\textbf{z}(\nu),\nu)-\textbf{v}_1(\textbf{x}(\nu),0,\nu))d\nu \\
    +&\varepsilon \int_{T_{i-1}}^{s}(\tilde{\textbf{f}}_1(\textbf{x}(\nu),\nu)-\tilde{\textbf{f}}_1(\tilde{\textbf{x}}(\nu),\nu))d\nu+ O(\varepsilon^2)
\end{align}
Through integration by parts, we obtain:
\begin{equation}
\begin{aligned} 
    &\int_{T_{i-1}}^{T_i}\textbf{F}_1(\textbf{x}(T_{i-1}),\tilde{\textbf{x}}(T_{i-1}),s)\Delta_i[\textbf{x}-\tilde{\textbf{x}}] ds \\
    =&\textbf{I}_{\textbf{F},i}(s)\,\Delta_i[\textbf{x}-\tilde{\textbf{x}}]\bigg|_{s=T_{i-1}}^{s=T_i}-\varepsilon\int_{T_{i-1}}^{T_i}\textbf{I}_{\textbf{F},i}(s)\,   \Delta[\tilde{\textbf{f}}_1] ds \\
    -&\varepsilon\int_{T_{i-1}}^{T_i}\textbf{I}_{\textbf{F},i}(s)\, \Delta[\textbf{v}_1]ds + O(\varepsilon^2)
\end{aligned}
\end{equation}
where we have that:
\begin{align}
    \Delta[\textbf{v}_1] &= \textbf{v}_1(\textbf{x}(s),\textbf{z}(s),s)-\textbf{v}_1(\textbf{x}(s),0,s) \\
    \Delta[\tilde{\textbf{f}}_1]&= \tilde{\textbf{f}}_1(\textbf{x}(s),s)-\tilde{\textbf{f}}_1(\tilde{\textbf{x}}(s),s) \\
    \textbf{I}_{\textbf{F},i}(s)&=\int_{T_{i-1}}^s\textbf{F}_1(\textbf{x}(T_{i-1}),\tilde{\textbf{x}}(T_{i-1}),\nu)d\nu\,
\end{align}
The boundary term coming out of the integration by parts vanishes because the right factor vanishes at $s=T_{i-1}$ and the left factor vanishes at $s=T_{i}$, leaving only the integral terms. Using Lipschitz continuity and boundedness on compact subsets, it is not hard to see that:
\begin{align}
    &\left\lVert \int_{T_{i-1}}^{T_i}\textbf{I}_{\textbf{F},i}(s)\,\Delta[\textbf{v}_1]ds \right\lVert\leq \int_{T_{i-1}}^{T_i} M_{\textbf{I}_\textbf{F},\textbf{v}_1,\mathcal{K}}\,\lVert \textbf{z}(s) \lVert ds \\
    &\left\lVert \int_{T_{i-1}}^{T_i}\textbf{I}_{\textbf{F},i}(s)\,\Delta[\tilde{\textbf{f}}_1] ds\right\lVert \leq \int_{T_{i-1}}^{T_i} M_{\textbf{I}_\textbf{F},\textbf{f}_1,\mathcal{K}}\,\lVert \Delta[\textbf{x}] \lVert ds\\
    &\left\lVert\int_{T_{i-1}}^{T_i}\Delta_i\big[\textbf{F}_1\big]\,\Delta[\textbf{x}] ds\right\lVert \leq \varepsilon\int_{T_{i-1}}^{T_i} L_{\textbf{F}_1,\mathcal{K}}\,\,\lVert \Delta[\textbf{x}] \lVert ds
\end{align}
where $\Delta[\textbf{x}]=\textbf{x}(s)-\tilde{\textbf{x}}(s)$. By utilizing the above estimates, the integral on the sub-intervals can be shown to satisfy the bound:
\begin{align}
    \left\lVert\textbf{I}_{3,i}\right\lVert \leq L_{\mathcal{K}} \,\varepsilon \,\int_{T_{i-1}}^{T_i} \left(\lVert \Delta[\textbf{x}] \lVert + \lVert \textbf{z}(s) \lVert\right)ds
\end{align}
for some Lipschitz constant $L_{\mathcal{K}}$ and consequently the integral term $\textbf{I}_3$ satisfies the bound:
\begin{align}\label{eq:I3_estimate}
    \lVert \textbf{I}_3\lVert &\leq L_{\mathcal{K}} \,\varepsilon \,\int_{\tau_\varepsilon}^{\tau} \left(\lVert \Delta[\textbf{x}] \lVert + \lVert \textbf{z}(s) \lVert\right)ds+ 2B_{\textbf{f}_1,\mathcal{K}}T
\end{align}
Combining (\ref{eq:I_estimate}), (\ref{eq:I1_estimate}), (\ref{eq:I2_estimate}), (\ref{eq:I5_estimate}), (\ref{eq:I4_estimate}), and  (\ref{eq:I3_estimate}), in addition to the fact that $\tau_\varepsilon<\tau_D,\,\forall \varepsilon\in(0,\varepsilon_4)$, we can show that the following estimate holds:
\begin{equation}
    \begin{aligned}
    &\lVert \textbf{x}(\tau)-\tilde{\textbf{x}}(\tau)\lVert \leq (M_{\mathcal{K},1}+M_{\mathcal{K},2} \tau_\varepsilon  + M_{\mathcal{K},3}\tau_D\varepsilon^2)\varepsilon \\
    &+ M_{\mathcal{K},4}\varepsilon \int_{\tau_\varepsilon}^{\tau}\lVert \textbf{z}(s)\lVert ds +M_{\mathcal{K},5}\varepsilon^2\int_{\tau_\varepsilon}^{\tau}\lVert \textbf{x}(s)-\tilde{\textbf{x}}(s)\lVert ds
    \end{aligned}
\end{equation}
Using the fact that $\lVert \textbf{z}(\tau)\lVert<2 \alpha\,\varepsilon^{\frac32},\,\forall \tau>\tau_\varepsilon$ by definition, we obtain that:
\begin{align}
    \int_{\tau_\varepsilon}^{\tau}\lVert \textbf{z}(s)\lVert ds \leq \alpha\, \tau\,\varepsilon^{\frac32} \leq \alpha\, \tau_D\,\varepsilon^{\frac32}
\end{align}
Now, remember that in order to obtain a contradiction we assumed that $\tau_D\leq t_f/\varepsilon^2$, and so we will have:
\begin{align}
\lVert \textbf{x}&(\tau)-\tilde{\textbf{x}}(\tau)\lVert \leq \delta(\varepsilon) +\int_{0}^{\tau} M_{\mathcal{K},5}\varepsilon^2 \lVert \textbf{x}(s)-\tilde{\textbf{x}}(s)\lVert ds
\end{align}
where the function $\delta(\varepsilon)$ is given by:
\begin{align}
\delta(\varepsilon)&= M_{\mathcal{K},1}\varepsilon + M_{\mathcal{K},2} \tau_\varepsilon\varepsilon + M_{\mathcal{K},3}t_f\varepsilon + M_{\mathcal{K},4}t_f \varepsilon^{\frac12}
\end{align}
An application of Gr\"onwall's inequality yields:
\begin{align}
    \lVert \textbf{x}&(\tau)-\tilde{\textbf{x}}(\tau)\lVert \leq \delta(\varepsilon) \text{e}^{M_{\mathcal{K},5}\varepsilon^2\tau} \leq \delta(\varepsilon) \text{e}^{M_{\mathcal{K},5}\varepsilon^2\tau_D}
\end{align}
on the interval $\tau\in[0,\tau_D]$. Once again, recall that we assumed that $\tau_D\leq t_f/\varepsilon^2$, and so we have: 
\begin{align}
    \lVert \textbf{x}(\tau)-\tilde{\textbf{x}}(\tau)\lVert \leq \delta(\varepsilon) \text{e}^{M_{\mathcal{K},5}t_f}
\end{align}
Now, observe that $\lim_{\varepsilon\rightarrow 0} \delta(\varepsilon) = 0$, and so we are guaranteed the existence of an $\varepsilon_5\in(0,\varepsilon_4)$ such that $\forall\varepsilon\in(0,\varepsilon_5)$ we will have:
\begin{align}
     \lVert \textbf{x}&(\tau_D)-\tilde{\textbf{x}}(\tau_D)\lVert \leq D/2
\end{align}
which contradicts the definition of $\tau_D$. Hence, the assumption that $\tau_D\leq t_f/\varepsilon^2$ is wrong, and we have that for all bounded subsets $\mathcal{B}_{\textbf{x}}\times \mathcal{B}_{\textbf{z}}\subset\mathbb{R}^n\times \mathbb{R}^m$, $\forall D\in(0,\infty)$, and $\forall t_f\in(0,\infty)$, $\exists \varepsilon_5\in(0,\varepsilon_4)$, such that $\forall \varepsilon\in(0,\varepsilon_5)$, $\forall (\textbf{x}_0,\textbf{z}_0)\in \mathcal{B}_{\textbf{x}}\times \mathcal{B}_{\textbf{z}}$, we have that $t_f/\varepsilon^2 < \tau_D$, which shows that the lemma also holds in case C1).
\end{proof}

\thref{prop:A} follows from this lemma, after reversing the time scaling $\tau=\omega (t-t_0)$ and the near identity part of the coordinate shift (\ref{eq:nearid_shift}), coupled with \cite[Theorem 2.1]{abdelgalil2022recursive} and the fact that for $\textbf{x}\in\mathcal{K}$ where $\mathcal{K}$ is any compact subset, the maps $\bm{\varphi}_i$ for $i\in\{1,2\}$ are uniformly bounded in time due to continuity and periodicity.

\end{proof}
\bibliography{refs.bib}
\bibliographystyle{IEEEtran}
\end{document}